\numberwithin{equation}{section}
\theoremstyle{plain}
\newtheorem{thm}{Theorem}[section]
\newtheorem{cor}[thm]{Corollary}
\newtheorem{lemma}[thm]{Lemma}
\newtheoremstyle{noparens}%
 {}{}%
 {\itshape}{}%
 {\bfseries}{.}%
 { }%
 {\thmname{#1}\thmnumber{ #2}\mdseries\thmnote{ #3}}
\theoremstyle{noparens}
\newtheorem{lemmaNoParens}[thm]{Lemma}
\newtheorem{thmNoParens}[thm]{Theorem}
\newtheorem{propositionNoParens}[thm]{Proposition}
\theoremstyle{definition}
\newtheorem{defn}[thm]{Definition}
\theoremstyle{remark}
\newtheorem{rmk}[thm]{Remark}
\newcommand{\Rmnum}[1]{\expandafter\@slowromancap\romannumeral #1@}
\begin{document}
\title{The Kobayashi metric and Gromov hyperbolicity on pseudoconvex domains of finite type in $\mathbb{C}^2$}
\thanks{Haichou Li is supported by NSFC (No. 12226318, No. 12226334).}
\author{Haichou Li\textsuperscript{1} $\&$ Xingsi Pu\textsuperscript{2,3} $\&$ Lang Wang\textsuperscript{2,3}}
\address{$1.$ College of Mathematics and informatics, South China Agricultural University, Guangzhou, 510640, China}
\address{$2.$ HLM, Academy of Mathematics and Systems Science,
Chinese Academy of Sciences, Beijing, 100190, China}
\address{$3.$ School of
Mathematical Sciences, University of Chinese Academy of Sciences,
Beijing, 100049, China }
\subjclass[2020]{32F45, 32T25}
\email{hcl2016@scau.edu.cn,\:puxs@amss.ac.cn,\:wanglang2020@amss.ac.cn}

\begin{abstract}
In this paper, we obtain a more precise estimate of Catlin-type distance for smoothly bounded pseudoconvex domain of finite type in $\mathbb{C}^2$. As an application, we get an alternative proof of the Gromov hyperbolicity of this domain equipped with the Kobayashi distance.
\end{abstract}

\maketitle

\section{Introduction}
\indent{}In several complex variables, the Kobayashi metric is an important invariant metric under biholomorphic mappings. And the Gromov hyperbolicity of this metric has been studied in recent years. In\ \cite{2000Gromov}, Balogh and Bonk firstly obtained a sharp estimate of Kobayashi distance on bounded strongly pseudoconvex domain in $\mathbb{C}^n$. By using the estimate, they obtained the Gromov hyperbolicity of this domain. Later Zimmer \cite{zimmer2016gromov} proved the Gromov hyperbolicity of convex domain of finite type in $\mathbb{C}^n$ by scaling method. After his work, Fiacchi \cite{2022Gromov} proved the case of pseudoconvex domain of finite type in $\mathbb{C}^2$.

\indent{}On the other hand, Catlin \cite{1989Estimates} and McNeal \cite{mcneal1994estimates} constructed polydisks on pseudoconvex domain of finite type in $\mathbb{C}^2$ or convex domain of finite type in $\mathbb{C}^n$. Moreover, McNeal also defined a local pseudodistance near a boundary point. Recently, Wang \cite{wang22} used it to get an estimate of Kobayashi distance on convex domain of finite type in $\mathbb{C}^n$. And it can give a different proof of Zimmer's result in \cite{zimmer2016gromov}.\\
\indent After Catlin's work, Herbort \cite{Herbort2005} used the corresponding local pseudodistance to get an estimate of Kobayashi distance on pseudoconvex domain of finite type in $\mathbb{C}^2$. Inspired by Herbort and Wang, we improve the estimate in \cite[Theorem 1.12]{liu2023bi}, which can obtain the Gromov hyperbolicity of $(\Omega,d_{\widetilde{K}})$. Here $d_{\widetilde{K}}$ is the Catlin-type distance (see Section \ref{sec3.4}). Since Catlin-type metric is comparable to Kobayashi metric, it can give an alternative proof of Theorem 1.1 in \cite{2022Gromov} which is originally stated as follows.
\begin{thmNoParens}\label{finite}
Let $\Omega\subset\mathbb{C}^2$ be a bounded pseudoconvex domain of finite type with Kobayashi metric $d_K$, then $(\Omega,d_K)$ is a Gromov hyperbolic space.
\end{thmNoParens}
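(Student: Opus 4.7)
My plan is to reduce Gromov hyperbolicity of $(\Omega, d_K)$ to that of $(\Omega, d_{\widetilde{K}})$, and then establish the latter via a sharp two-sided estimate of $d_{\widetilde{K}}$. First, I would recall the Catlin--McNeal construction: at each boundary point $p \in \partial\Omega$ of finite type $2m$, one obtains anisotropic coordinates and polydiscs with normal radius $\epsilon$ and tangential radius $\tau(p,\epsilon) \approx \epsilon^{1/2m}$, together with the Herbort local pseudodistance $M(\cdot,\cdot)$ that captures this geometry. Denote by $\delta(z)$ the Euclidean distance to $\partial\Omega$ and by $\pi(z)$ a closest boundary projection.

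The central step is to improve the estimate of \cite[Theorem 1.12]{liu2023bi} to matching upper and lower bounds of Balogh--Bonk type, roughly
\[
 d_{\widetilde{K}}(z,w) \;=\; \frac{1}{2}\log\!\left(1 + \frac{M(\pi(z),\pi(w))^{2}}{\delta(z)\,\delta(w)}\right) \;+\; O(1),
\]
where the $O(1)$ term is uniform on $\Omega\times\Omega$. The upper bound follows by constructing explicit chains of analytic discs supported in the Catlin polydiscs and integrating the Kobayashi length, while the lower bound rests on Herbort's pluri\-sub\-harmonic peak-function comparison, now sharpened so that its left- and right-hand sides have the same logarithmic form. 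Once this estimate is in place, I would verify the four-point Gromov inequality directly on the right-hand side: this is essentially the same algebraic check that Balogh--Bonk carried out in the strongly pseudoconvex setting, transplanted to the anisotropic Herbort pseudodistance $M$.

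The main obstacle I anticipate is in the lower bound and in the four-point verification, because the tangential radius $\tau(p,\epsilon)$ and the pseudodistance $M$ depend non-uniformly on the type, which varies along $\partial\Omega$. One must control how $M$ behaves under small perturbations of the base point and show that $M$ itself satisfies a quasi-ultra\-metric-type inequality on boundary pieces, so that the four-point condition survives when the three possible ``Gromov configurations'' of four points in $\Omega$ are examined (all four near the boundary, some in the interior, etc.). These technicalities are the content of the estimate improvement promised in the introduction.

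Finally, because the Catlin-type metric and the Kobayashi metric are comparable on $\Omega$, the identity map $(\Omega, d_{\widetilde{K}}) \to (\Omega, d_K)$ is a bi-Lipschitz homeomorphism, hence a $(1,C)$-quasi-isometry. Since Gromov hyperbolicity is invariant under quasi-isometry of geodesic (or roughly geodesic) metric spaces, the hyperbolicity of $(\Omega, d_{\widetilde{K}})$ established in the previous step transfers to $(\Omega, d_K)$, giving the theorem.
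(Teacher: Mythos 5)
Your proposal follows essentially the same route as the paper: a Balogh--Bonk-type two-sided estimate of $d_{\widetilde{K}}$ in terms of the Herbort pseudodistance (the paper's Theorem \ref{guji}), an algebraic verification of the four-point condition from that estimate (the paper's proof of Theorem \ref{thm1.3}), and transfer of hyperbolicity to $d_K$ via the quasi-isometry given by $\widetilde{K}\asymp K$ (Lemma \ref{hyp}). One caveat: your displayed estimate $\tfrac{1}{2}\log\bigl(1+M(\pi(z),\pi(w))^{2}/(\delta(z)\delta(w))\bigr)$ omits the height term; for points on the same inward normal with $\delta(z)\ll\delta(w)$ it evaluates to $0$ while $d_{\widetilde{K}}(z,w)=|\log(\delta(z)/\delta(w))|$ is large, so the correct form must carry $\delta(z)\vee\delta(w)$ added to the pseudodistance in the numerator, exactly as in the paper's function $g$; with that correction your four-point check goes through as in the paper.
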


\indent{}Let $\Omega\subset\mathbb{C}^2$ be a smoothly bounded pseudoconvex domain of finite type and $U$ be a neighborhood of $\partial\Omega$. After shrinking $U$, Herbort\cite{Herbort2005} defined a pseudodistance $d(\cdot,\cdot)$ in $U$ (see Section \ref{sec3.2}). We extend it to $\Omega$ which is denoted by $D(\cdot,\cdot)$,  and set the function $g:\Omega\times\Omega\rightarrow \mathbb{R}$ by
\[
g(x,y)=2\log\dfrac{D(x,y)+\delta(x)\vee\delta(y)}{\sqrt{\delta(x)\delta(y)}}
\]
where $a\vee{b}:=\max\{a,b\}$. \\
\indent{}Note that the construction of $g$ was already present in \cite{2000Gromov}, and similar to \cite[Corollary 1.3]{2000Gromov} we have the following main result.
\begin{thm}\label{guji}
Let $\Omega\subset\mathbb{C}^2$ be a smoothly bounded pseudoconvex domain of finite type, then there is a constant $C>0$ such that
\[
g(x,y)-C\leq{}d_{\widetilde{K}}(x,y)\leq{}g(x,y)+C
\]
for any $x,y\in\Omega$. Here $d_{\widetilde{K}}$ is the Catlin-type distance.
\end{thm}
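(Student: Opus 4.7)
The plan is to follow the template of Balogh and Bonk's Corollary 1.3 in \cite{2000Gromov}, proving the upper and lower bounds for $d_{\widetilde{K}}(x,y)-g(x,y)$ separately. Both directions hinge on sharp pointwise estimates for the infinitesimal Catlin-type metric $F_{\widetilde{K}}$, which come from Herbort's estimates \cite{Herbort2005} together with the Catlin polydisk construction in $\mathbb{C}^2$.

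For the upper bound $d_{\widetilde{K}}(x,y)\leq g(x,y)+C$, I would build a piecewise-smooth curve from $x$ to $y$ whose $F_{\widetilde{K}}$-length is at most $g(x,y)+C$. Given $x,y\in\Omega$, set the threshold $h:=D(x,y)+\delta(x)\vee\delta(y)$. First, move $x$ along an inward-normal direction to a point $\tilde x$ with $\delta(\tilde x)\asymp h$; because the normal component of $F_{\widetilde{K}}$ is bounded above by $C/\delta$, this segment has length $\log(h/\delta(x))+O(1)$. Symmetrically, connect $y$ to $\tilde y$ with $\delta(\tilde y)\asymp h$ at cost $\log(h/\delta(y))+O(1)$. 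Finally, connect $\tilde x$ to $\tilde y$ using a Catlin polydisk at depth $h$: the choice of $h$ ensures that the tangential radius of this polydisk is at least a constant multiple of $D(x,y)$, so by the upper estimate on $F_{\widetilde{K}}$ in tangential directions, the middle segment contributes only $O(1)$. Summing the three contributions yields $2\log\bigl(h/\sqrt{\delta(x)\delta(y)}\bigr)+O(1)=g(x,y)+O(1)$.

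For the lower bound $d_{\widetilde{K}}(x,y)\geq g(x,y)-C$, take any admissible curve $\gamma:[0,1]\to\Omega$ joining $x$ to $y$ and bound its $F_{\widetilde{K}}$-length from below. The main tool is Herbort's pointwise lower estimate, which, after splitting $v=v_N+v_T$ into complex-normal and complex-tangential parts, gives $F_{\widetilde{K}}(z;v)\gtrsim |v_N|/\delta(z)+|v_T|/\tau(z,\delta(z))$ with $\tau$ the Catlin tangential radius. Integrating the normal part along $\gamma$ yields control of $|\log\delta(\gamma(t))|$, hence of $\tfrac12(-\log\delta(x)-\log\delta(y))$; integrating the tangential part and using the definition of the Herbort pseudodistance controls $D(x,y)$ up to the maximal depth attained by $\gamma$. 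Combining these, and doing the usual bookkeeping over the subarcs where $\delta(\gamma)$ lies in different dyadic scales, one obtains a length lower bound of the form $2\log\bigl(D(x,y)+\delta(x)\vee\delta(y)\bigr)-\log\delta(x)-\log\delta(y)-O(1)=g(x,y)-O(1)$. Taking the infimum over $\gamma$ gives the claim.

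The main obstacle will be the lower bound: one must rule out the possibility that a curve exploits the anisotropy of $\Omega$ near the boundary to shortcut the tangential displacement measured by $D$. Concretely, this requires merging the pointwise estimate for $F_{\widetilde{K}}$ with a quasi-triangle-type property for $D$ along arbitrary arcs, and splitting the analysis into the regime where $\gamma$ stays close to $\partial\Omega$ (where the tangential radius $\tau$ and the anisotropic structure dominate) and the regime where $\gamma$ retreats into $\Omega$ (where the naive bound $\delta(\gamma)\geq c$ suffices). The two regimes must then be stitched together using the subadditivity of $D$ and the relation between $D$ and the Catlin polydisk geometry established in Section \ref{sec3.2}.
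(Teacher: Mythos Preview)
Your proposal follows the same Balogh--Bonk template as the paper, and the upper bound is handled identically: push $x,y$ inward along normals to depth $\asymp D(x,y)\vee\delta(x)\vee\delta(y)$, then connect at that depth using the polydisk estimate (the paper packages this last step as Lemma~\ref{upp}). The dyadic decomposition of an arbitrary curve $\gamma$ by the values of $\delta(\gamma(t))$ and the appeal to a quasi-triangle property of $D$ are also the same ingredients the paper uses for the lower bound.

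Where the paper differs from your sketch is in the \emph{mechanism} that converts tangential displacement into length. You propose to integrate the pointwise lower bound $|v_T|/\tau(z,\delta(z))$ directly and then read off $D(x,y)$; this is delicate because $\tau(z,\delta(z))$ varies with the basepoint and there is no obvious way to pass from an arclength-type integral to the polydisk-defined pseudodistance $D$. The paper instead proves a discrete ``exit lemma'' (Lemma~\ref{lower}): crossing the boundary of a single Catlin polydisk $\partial Q_{c\delta(p)}(p)$ costs at least a fixed amount $c_0$ of Kobayashi length, established via the Carath\'eodory distance and explicit bounded holomorphic functions built from $\Phi_x$. One then counts how many such polydisk boundaries the curve must cross, and bounds that count from below using the $\epsilon$-power quasi-triangle inequality $D^\epsilon(x,y)\le 2\sum D^\epsilon(p_{j-1},p_j)$ of Lemma~\ref{wang}; this yields a term of order $(D(x,y)/H)^\epsilon$ in the length, which after optimizing over $H$ gives the $2\log(D(x,y)/\sqrt{\delta(x)\delta(y)})$ contribution. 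So the paper's argument is a crossing-count rather than an integral, and the key analytic input is the Carath\'eodory-based Lemma~\ref{lower} rather than a direct infinitesimal lower bound. Your approach is not wrong in spirit, but the step ``integrating the tangential part controls $D(x,y)$'' would need to be replaced by something like this exit-counting argument to go through cleanly.
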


 As an application, we prove the following theorem which implies Theorem \ref{finite}.
\begin{thm}\label{thm1.3}
Let $\Omega\subset\mathbb{C}^2$ be a smoothly bounded pseudoconvex domain of finite type, then $(\Omega,d_{\widetilde{K}})$ is a Gromov hyperbolic space.
\end{thm}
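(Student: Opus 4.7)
The plan is to use Theorem \ref{guji} to transfer Gromov hyperbolicity from the explicit function $g$ to $d_{\widetilde K}$. Gromov $\delta$-hyperbolicity of a function $f$ on $\Omega\times\Omega$, expressed by the four-point condition
\[
f(x,z) + f(y,w) \leq \max\{f(x,y)+f(z,w),\,f(x,w)+f(y,z)\} + 2\delta,
\]
is preserved under bounded additive perturbations: if $|f_1-f_2|\leq C$ and $f_1$ is $\delta$-hyperbolic, then $f_2$ is $(\delta+2C)$-hyperbolic. Thus Theorem \ref{guji} reduces the problem to verifying the four-point condition for $g$ with a universal constant.

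Following the template introduced by Balogh and Bonk in \cite{2000Gromov}, where the function $g$ was designed precisely so that this reduction succeeds, the first step is to rewrite the condition in terms of the quasi-metric $\rho(x,y):=D(x,y)+\delta(x)\vee\delta(y)$. A direct computation shows that the logarithms of $\delta(x),\delta(y),\delta(z),\delta(w)$ appearing in $g$ cancel pairwise in each of the three sums $g(x,y)+g(z,w)$, $g(x,z)+g(y,w)$, $g(x,w)+g(y,z)$, so the four-point condition for $g$ becomes the Ptolemy-type statement that, among the three products
\[
\rho(x,y)\rho(z,w),\qquad \rho(x,z)\rho(y,w),\qquad \rho(x,w)\rho(y,z),
\]
the two largest are comparable up to a universal multiplicative constant.

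To establish this, I would run a case analysis based on which of the three products is smallest. In each case the inequality reduces to the quasi-triangle inequality $D(x,z)\lesssim D(x,y)+D(y,z)$ inherited from Herbort's local pseudodistance $d$ recalled in Section \ref{sec3.2}, together with the elementary bound $\delta(x)\vee\delta(z)\leq (\delta(x)\vee\delta(y))+(\delta(y)\vee\delta(z))$. The resulting constant depends only on the geometry of $\Omega$, so by the initial reduction $(\Omega,d_{\widetilde K})$ is Gromov hyperbolic.

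The main obstacle I anticipate is ensuring that the quasi-triangle inequality for the extended pseudodistance $D$, and with it the Ptolemy-type bound above, holds uniformly on $\Omega\times\Omega$ rather than only on Herbort's boundary neighborhood $U$ where $d$ is defined. Since $d_{\widetilde K}$ is comparable to $d_K$, which in turn is bi-Lipschitz equivalent to the Euclidean distance on compact subsets of $\Omega$, the contributions coming from pairs of points in a fixed compact subset of $\Omega$ are uniformly bounded and can be absorbed into the additive constant of Theorem \ref{guji}. The problem therefore localizes to a neighborhood of $\partial\Omega$, where the Balogh-Bonk argument applies directly.
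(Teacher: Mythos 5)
Your proposal is correct and follows essentially the same route as the paper: reduce via Theorem \ref{guji} to the four-point condition for $g$, observe that the $\log\delta$ terms cancel so the condition becomes the Ptolemy-type inequality for $\rho(x,y)=D(x,y)+\delta(x)\vee\delta(y)$, and derive that inequality from the quasi-metric properties of $D$ (Lemma \ref{pseudo}) by a case analysis on the smallest product. Your worry about uniformity off the boundary neighborhood is already handled by the paper's global extension of $D$ (the discrete piece $d_1$), for which Lemma \ref{pseudo} is stated on all of $\Omega$.
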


From Theorem \ref{guji}, we know that $D(a,b)\asymp\operatorname{exp}(-(a|b)_w^{d_{\widetilde{K}}})$
for $a,b\in\partial\Omega$ with a fixed point $w\in\Omega$. Combining Propositon \ref{ext}, we have the following extension result for quasi-isometry.
\begin{cor}\label{cor4.5}
Suppose $\Omega_1,\Omega_2\subset\mathbb{C}^2$ are smoothly bounded pseudoconvex domains of finite type, and $f:\Omega_1\rightarrow\Omega_2$ is a quasi-isometry with respect to the Kobayashi distances. Then $f$ extends continuously to a map $\bar{f}:\overline{\Omega}_1\rightarrow\overline{\Omega}_2$ such that $\bar{f}|_{\partial\Omega_1}:(\partial\Omega_1,D_1)\rightarrow(\partial\Omega_2,D_2)$ is a power quasisymmtery with respect to the  pseudodistances. Moreover, it is bi-H$\ddot{o}$lder continuous with respect to the pseudodistances.
\end{cor}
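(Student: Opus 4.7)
The plan is to reduce Corollary \ref{cor4.5} to the standard principle that a quasi-isometry between proper geodesic Gromov hyperbolic spaces extends to a power quasisymmetry of their Gromov boundaries equipped with visual metrics, which is exactly what Proposition \ref{ext} should encapsulate.

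First, since the Catlin-type distance $d_{\widetilde{K}}$ is comparable to the Kobayashi distance on each $\Omega_i$, the given map $f$ is automatically a quasi-isometry $(\Omega_1,d_{\widetilde{K},1})\to(\Omega_2,d_{\widetilde{K},2})$, possibly with larger constants; by Theorem \ref{thm1.3} both source and target are Gromov hyperbolic. Next, Theorem \ref{guji} yields the Gromov product identification
\[
D_i(a,b)\asymp\exp\bigl(-(a|b)_w^{d_{\widetilde{K},i}}\bigr),\qquad a,b\in\partial\Omega_i,
\]
which realises $(\partial\Omega_i,D_i)$, up to bi-Lipschitz equivalence, as a visual metric on the Gromov boundary of $(\Omega_i,d_{\widetilde{K},i})$. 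Applying Proposition \ref{ext} to $f$ therefore produces a continuous extension $\bar{f}:\overline{\Omega}_1\to\overline{\Omega}_2$ whose restriction $\bar{f}|_{\partial\Omega_1}$ is a power quasisymmetry onto $\partial\Omega_2$ with respect to $(D_1,D_2)$. Bi-H\"older continuity then follows from the general fact that any power quasisymmetry between bounded metric spaces is bi-H\"older, together with the compactness (hence $D_i$-boundedness) of $\partial\Omega_i$.

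The main technical point I expect is the precise identification of $(\partial\Omega_i,D_i)$ with a visual metric in the sense required by Proposition \ref{ext}, since Theorem \ref{guji} furnishes only the asymptotic Gromov product formula, and because $(\Omega_i,d_{\widetilde{K},i})$ must be treated as, or at least be quasi-isometric to, a proper geodesic space in order to apply the standard boundary-extension machinery. Once that identification is pinned down, the rest of the argument is a direct citation of well-known Gromov hyperbolic boundary theory in the spirit of Bonk--Schramm and Mackay--Tyson, and no further domain-theoretic input beyond Theorems \ref{guji} and \ref{thm1.3} is needed.
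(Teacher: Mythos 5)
Your proposal follows essentially the same route as the paper: the paper derives the corollary directly from the Gromov product identification $D(a,b)\asymp\exp(-(a|b)_w^{d_{\widetilde{K}}})$ supplied by Theorem \ref{guji}, the hyperbolicity from Theorem \ref{thm1.3}, and the boundary-extension machinery of Proposition \ref{ext}, exactly as you outline. The technical caveats you flag (realising $D_i$ as a visual metric despite being only a pseudodistance, and the properness/geodesicity of $(\Omega_i,d_{\widetilde{K},i})$) are likewise left implicit in the paper, so your treatment is, if anything, slightly more careful.
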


\begin{rmk}
As $|a-b|^m\lesssim D(a,b)\lesssim|a-b|$ for $a,b\in\partial\Omega$, it implies \cite[Theorem 1.2]{liu2023bi} that $\bar{f}|_{\partial\Omega_1}$ is bi-H$\ddot{\operatorname{o}}$lder continuous with respect to the Euclidean distance.

\end{rmk}

The paper is organized as follows. In section \ref{sec2} we give the preliminaries will be used. In section \ref{sec3} we recall the construction of polydisks and Catlin-type\ metric. In section \ref{sec4} we give the proofs of Theorem \ref{guji} and Theorem \ref{thm1.3}.

\section{Preliminaries}\label{sec2}
\subsection{Notations.}(1) We denote $|z|$ as the standard Euclidean norm for $z\in\mathbb{C}^n$ and let $|z_1-z_2|$ denote the Euclidean distance of $z_1,z_2\in\mathbb{C}^n$.

\noindent{}(2) Suppose $\Omega$ is a bounded domain in $\mathbb{C}^n$, for $x\in\mathbb{C}^n$ we set
\[
\delta(x):=\inf\{|x-y|:y\in\partial\Omega\}
\]
 and denote $N_{\epsilon}(\partial\Omega):=\{x\in\mathbb{C}^n:\delta(x)<\epsilon\}$ whenever $\epsilon>0$.\\
\noindent{}(3) Let $\Delta\subset{\mathbb{C}}$ be the unit disk and $z,w\in\Delta.$ We denote $d_{\Delta}(z,w)$ as Poincare distance in $\Delta$.

\noindent{}(4) Let $f,g$ be functions, we denote $f \lesssim g$ if there is $C>0$ such that $f\leq Cg$, and we write $f\asymp g$ if $f\lesssim g$ and $g\lesssim f$.

\noindent (5) For $z\in\mathbb{C}^n$, we denote $z_i$ as the $i$-th component of $z$.

\subsection{Boundary projection.}
We give the relationship between a point and its boundary projection. And the proof can be found in \cite[Lemma 2.1]{2000Gromov}.

\begin{lemma}\label{lem2.1}
Suppose $\Omega=\{z\in\mathbb{R}^n:r(z)<0\}$ is a bounded domain with $C^2$-smooth boundary. Then there exists a constant $\delta_0>0$ such that

(1) For every point $x\in{N_{\delta_0}(\partial\Omega)}$, there is a unique point $\pi(x)\in\partial\Omega$ such that $|x-\pi(x)|=\delta(x)$.

(2) The signed distance function
\begin{align*}
\rho(x)=
\begin{cases}
 -\delta(x)\qquad &x\in\Omega\\
 \delta(x) \qquad &x\in\mathbb{R}^n\setminus\Omega
\end{cases}
 \end{align*}
is $C^2$-smooth in $N_{\delta_0}(\partial\Omega)$.

(3) For the fibers of map $\pi:N_{\delta_0}(\partial\Omega)\rightarrow\partial\Omega$ we have
\[
\pi^{-1}(p)=(p-\delta_0\vec{n}(p),p+\delta_0\vec{n}(p)),
\]
where $\vec{n}(p)$ is the outer unit normal vector of $\partial\Omega$ at $p\in\partial\Omega.$

(4) The gradient of the defining function $\rho$ satisfies
\[
\nabla{\rho}(x)=\vec{n}(\pi(x)).
\]

(5) The projection $\pi:N_{\delta_0}(\partial\Omega)\rightarrow\partial\Omega$ is $C^1$-smooth.
\end{lemma}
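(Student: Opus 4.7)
The plan is to realize a tubular neighborhood of $\partial\Omega$ as the diffeomorphic image of a normal slab over $\partial\Omega$, and then read off all five conclusions from this single construction. The main tool is the inverse function theorem applied to the normal exponential map, combined with compactness of $\partial\Omega$.

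First I would introduce the map $\Psi:\partial\Omega\times\mathbb{R}\to\mathbb{R}^n$ by $\Psi(p,t)=p+t\vec{n}(p)$, where $\vec{n}(p)=\nabla r(p)/|\nabla r(p)|$ is the outer unit normal. Since $r\in C^2$ and $\nabla r$ is non-vanishing on $\partial\Omega$, the field $\vec{n}$ is $C^1$ on a neighborhood of $\partial\Omega$, hence $\Psi$ is $C^1$. At each $(p,0)$, the derivative $d\Psi_{(p,0)}$ sends $T_p\partial\Omega$ isomorphically to itself and $\partial_t$ to $\vec{n}(p)$, and these span $\mathbb{R}^n$, so $d\Psi_{(p,0)}$ is an isomorphism. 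By the inverse function theorem together with compactness of $\partial\Omega$, there is a uniform $\delta_0>0$ such that $\Psi:\partial\Omega\times(-\delta_0,\delta_0)\to N_{\delta_0}(\partial\Omega)$ is a $C^1$ diffeomorphism. This immediately yields (1), (3) and (5): the uniqueness of the closest point $\pi(x)$ follows from injectivity of $\Psi$, the fiber formula in (3) is simply the image of the segment $\{p\}\times(-\delta_0,\delta_0)$ under $\Psi$, and $\pi$ is $C^1$ as the first component of $\Psi^{-1}$.

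Next I would establish (4) by a variational argument. Since $\pi(x)$ minimizes $|x-p|^2$ over $p\in\partial\Omega$, the Lagrange condition forces $x-\pi(x)$ to be normal to $\partial\Omega$ at $\pi(x)$, which, together with the sign convention, gives $x-\pi(x)=\rho(x)\vec{n}(\pi(x))$. Differentiating the identity $\rho\circ\Psi(p,t)=t$ in the $t$-direction yields $\nabla\rho(\Psi(p,t))\cdot\vec{n}(p)=1$, and combined with the vanishing of the directional derivative along tangential variations one concludes $\nabla\rho(x)=\vec{n}(\pi(x))$ on $N_{\delta_0}(\partial\Omega)$. Finally, (2) follows because both $\vec{n}$ (defined on the neighborhood as $\nabla r/|\nabla r|$) and $\pi$ are $C^1$, so their composition is $C^1$, and a function whose gradient is $C^1$ is itself $C^2$.

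The hardest step, in my view, is the $C^2$-regularity statement in (2). The tubular diffeomorphism $\Psi$ is only $C^1$, and a direct inversion gives $\rho=t\circ\Psi^{-1}$ merely as a $C^1$ function, which is one degree short of what is claimed. The key trick is not to extract regularity from $\Psi^{-1}$ alone, but to establish the identity $\nabla\rho=\vec{n}\circ\pi$ independently from the variational characterization of $\pi$, and then to exploit the fact that $\vec{n}$ extends as a $C^1$ field on the full neighborhood. One must also be careful with the sign of $\rho$ across $\partial\Omega$ and verify that the formula for $\nabla\rho$ holds uniformly on both sides of the boundary.
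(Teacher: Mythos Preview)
Your argument is correct and is precisely the standard tubular--neighborhood construction via the normal exponential map and the inverse function theorem. The paper itself does not supply a proof of this lemma at all: it simply states that ``the proof can be found in \cite[Lemma~2.1]{2000Gromov}'' and moves on. So there is no in-text proof to compare against; your sketch is a faithful expansion of what the cited reference (Balogh--Bonk) does.

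One small point worth tightening: when you claim that (1), (3), (5) follow ``immediately'' from the injectivity of $\Psi$, you are implicitly using the Lagrange condition already. Injectivity of $\Psi$ alone only tells you that normal segments of length $<\delta_0$ from distinct boundary points do not meet; to conclude that the nearest boundary point to $x=\Psi(p,t)$ is actually $p$, you still need to know that any nearest boundary point $q$ satisfies $x-q\perp T_q\partial\Omega$, hence $x=\Psi(q,s)$ with $|s|\le|t|<\delta_0$, and then invoke injectivity. You do state the Lagrange condition later for (4), but logically it should appear before (1). This is a matter of presentation, not a gap.

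Your handling of the regularity bootstrap in (2)---recovering $C^2$ for $\rho$ from the identity $\nabla\rho=\vec n\circ\pi$ with $\vec n$ and $\pi$ each $C^1$---is exactly the right trick, and your closing paragraph correctly identifies it as the only non-routine step.
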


Now we recall that for any $p\in\partial\Omega,$ the real tangent space $T_p\partial\Omega$ is given by
\[
T_p\partial\Omega=\{X\in\mathbb{C}^n:\operatorname{Re}\langle\bar{\partial}r(p),X\rangle=0\},
\]
and its complex tangent space is
\[
H_p\partial\Omega=\{X\in\mathbb{C}^n:\langle\bar{\partial}r(p),X\rangle=0\},
\]
where $\bar{\partial}r(p)=(\dfrac{\partial{r}}{\partial{\bar{z}_1}}(p),\cdots,\dfrac{\partial{r}}{\partial{\bar{z}_n}}(p))$. Here $\langle\cdot,\cdot\rangle$ is the standard Hermitian product. Therefore, for any $0\neq{}X\in\mathbb{C}^n$ it has a unique orthogonal decomposition $X=X_N+X_H$ with $X_H\in{}H_p\partial\Omega$ and $X_N\in{}N_p\partial\Omega$. Here $N_p\partial\Omega$ is the complex one-dimensional subspace of $\mathbb{C}^n$ orthogonal to $H_p\partial\Omega$.

\subsection{The invariant metrics.} A $Finsler\ metric$ in $\Omega$ is an upper continuous map $F:\Omega\ \times\ \mathbb{C}^n\rightarrow[0,\infty)$ such that $F(z,tX)=|t|F(z,X)$ for all $z\in\Omega,\ t\in\mathbb{C},\ X\in\mathbb{C}^n.$ And the corresponding distance $d_F$ is defined by
\begin{align*}
d_F(x,y):=\inf\{L_F(\gamma):\gamma:[0,1]\rightarrow\Omega\ \operatorname{is\ a\ piecewise}\ C^1\operatorname{-smooth\ curve}\\
\operatorname{with}\ \gamma(0)=x,\gamma(1)=y\},
\end{align*}
where $L_F(\gamma):=\int_0^1F(\gamma(t),\dot{\gamma}(t))dt.$

For a domain $\Omega\subset\mathbb{C}^n$, the Kobayashi metric  $K(z,X)$ is defined by
\[
K(z,X):=\inf\{|\xi|:\exists{f\in \operatorname{Hol}(\Delta,\Omega),\ \operatorname{with}\ f(0)=z,d(f)_0(\xi)=X}\}
\]
for $z\in\Omega,\ 0\neq X\in\mathbb{C}^n$, and we denote $d_K$ the corresponding Kobayashi distance. Moreover, the Caratheodory distance is defined by
\[
d_C(p,q):=\sup\{d_{\Delta}(f(p),f(q)):f:\Omega\rightarrow \Delta\operatorname{ is\ holomorphic}\}.
\]

\noindent{}Due to the result in \cite[Theorem 1]{royden2006remarks}, we know that $d_K(p,q)\geq{}d_C(p,q)$ for any $p,q\in\Omega$.
\subsection{Gromov hyperbolicity.}
Now we introduce some definitions and properties about Gromov hyperbolicity. Refer to \cite{Metric99} for details.

Let $(X,d)$ be a metric space. The $Gromov\ product$ of two points $x,y\in{X}$ with respect to $w\in{X}$ is defined as follows
\[
(x|y)_w:=\dfrac{1}{2}(d(x,w)+d(y,w)-d(x,y)).
\]
\indent Recall that $(X,d)$ is a $geodesic$ metric space if  any $x,y\in{X}$ can be joined by a geodesic segment, i.e., the image of an isometry $\gamma:[0,d(x,y)]\rightarrow X$ with $\gamma(0)=x$ and $\gamma(d(x,y))=y$. And we call a metric space $(X,d)$ is $proper$ if any closed ball in $X$ is compact. We say that a proper geodesic metric space $(X,d)$ is $Gromov\ hyperbolic$ if there is a constant $\delta\geq0$ such that
\[
(x|y)_w\geq(x|z)_w\wedge(z|y)_w-\delta
\]
for any $x,y,z,w\in{X}$, where $a\wedge{b}:=\min\{a,b\}$.

\begin{rmk}\label{rmk}
The above condition can be rewritten as a 4-point condition:
\[
d(x,w)+d(y,z)\leq\max\{d(x,z)+d(y,w),d(x,y)+d(z,w)\}+2\delta
\]
for all $x,w,y,z\in X.$
\end{rmk}

For a Gromov hyperbolic space $X$, we recall the definition of Gromov boundary.
\begin{defn}
(1) A sequence $\left\{x_i\right\}$ in $X$ is called a $Gromov\text{ } sequence$ if $\left(x_i |x_j\right)_\omega \rightarrow \infty$ as $i$, $j \rightarrow \infty$.

(2) Two such sequences $\left\{x_i\right\}$ and $\left\{y_j\right\}$ are said to be $equivalent$ if $\left(x_i | y_i\right)_\omega \rightarrow \infty$ as $i \rightarrow \infty$.

(3) The $Gromov \text{ }boundary$ $\partial_G X$ of $X$ is defined to be the set of all equivalence classes of Gromov sequences, and $\overline{X}^G=X \cup \partial_G X$ is called the $Gromov \text{ }closure$ of $X$.

(4) For $a \in X$ and $b \in \partial_G X$, the Gromov product $(a | b)_\omega$ of $a$ and $b$ is defined by
$$
(a| b)_\omega=\inf \left\{\liminf _{i \rightarrow \infty}\left(a | b_i\right)_\omega:\left\{b_i\right\} \in b\right\} .
$$

(5) For $a, b \in \partial_G X$, the Gromov product $(a|b)_\omega$ of $a$ and $b$ is defined by
$$
(a|b)_\omega=\inf \left\{\liminf _{i \rightarrow \infty}\left(a_i | b_i\right)_\omega:\left\{a_i\right\} \in a \text { and }\left\{b_i\right\} \in b\right\} .
$$
\end{defn}

For a Gromov hyperbolic space $X$, one can define a class of visual metrics on $\partial_G X$ via the extended Gromov products. For any metric $\rho_G$ in this class, there exist a parameter $\epsilon>0$ and a base point $\omega\in X$ with
\begin{equation}\label{visual}
\rho_G(a, b) \asymp \exp \left(-\epsilon(a|b)_w\right), \quad \text { for } a, b \in \partial_G X .
\end{equation}

\begin{defn}
Suppose $f:X\rightarrow Y$ is a map between metric spaces, and $\lambda\geq1,k\geq0$ are constants. If for any $x,y\in X$
\[
\dfrac{1}{\lambda}d_X(x,y)-k\leq{}d_Y(f(x),f(y))\leq{}\lambda{}d_X(x,y)+k,
\]
we say that $f$ is an $(\lambda,k)$-$quasi$-$isometry$.
\end{defn}

\begin{lemmaNoParens}[{\cite[Part \Rmnum{3}: Theorem 1.9]{Metric99}}]\label{hyp}
Suppose $(X,d_X)$ and $(Y,d_Y)$ are geodesic metric spaces and let $f:X\rightarrow Y$ be a $(\lambda,k)$-quasi-isometric embedding, then $(X,d_X)$ is a Gromov hyperbolic space if $(Y,d_Y)$ is a Gromov hyperbolic space.
\end{lemmaNoParens}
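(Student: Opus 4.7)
The plan is to work with the thin-triangle characterisation of Gromov hyperbolicity, which for a geodesic metric space is equivalent to the $4$-point condition recorded in Remark \ref{rmk}. I would begin by fixing an arbitrary geodesic triangle in $X$ with vertices $x_1,x_2,x_3$ and geodesic sides $\gamma_{ij}$. Because $f$ is a $(\lambda,k)$-quasi-isometric embedding, each composition $f\circ\gamma_{ij}$ is a $(\lambda,k)$-quasi-geodesic in $Y$ joining $f(x_i)$ to $f(x_j)$, so the triangle in $X$ gets pushed forward to a ``quasi-geodesic triangle'' in $Y$.

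Next I would invoke the stability of quasi-geodesics (the Morse-type lemma) available in any Gromov hyperbolic geodesic space: there exists a constant $D=D(\lambda,k,\delta_Y)$ such that each $f\circ\gamma_{ij}$ lies within Hausdorff distance $D$ of an honest geodesic segment $\eta_{ij}\subset Y$ with the same endpoints. Since $Y$ is $\delta_Y$-hyperbolic, the geodesic triangle $\eta_{12}\cup\eta_{23}\cup\eta_{13}$ is $\delta_Y$-thin, and combining this with the Morse bound shows that the quasi-geodesic triangle formed by the $f\circ\gamma_{ij}$ is $(\delta_Y+2D)$-thin.

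Finally I would pull the thinness back through $f$. Given a point $p\in\gamma_{12}$, there is $q\in\gamma_{13}\cup\gamma_{23}$ with $d_Y(f(p),f(q))\leq\delta_Y+2D$, and the lower bound in the definition of quasi-isometric embedding forces $d_X(p,q)\leq\lambda(\delta_Y+2D+k)$. The symmetric estimate on the remaining sides shows that every geodesic triangle in $X$ is uniformly thin with a constant depending only on $\lambda,k,\delta_Y,D$, so $X$ is Gromov hyperbolic.

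The principal obstacle is the Morse-type stability lemma: this is the standard but non-trivial ingredient that quasi-geodesics in a Gromov hyperbolic geodesic space track genuine geodesics with the same endpoints up to a bounded Hausdorff error. A purely 4-point-condition approach would also work in spirit, but pushing the 4-point inequality through a QI embedding introduces a multiplicative $\lambda^2$ factor that is not of the right form, and one has to use the geodesic hypothesis on $X$ to upgrade this coarse inequality back to an additive one, which again invokes a Morse-type argument. Once the stability lemma is available, the rest of the proof is essentially bookkeeping of the constants $\lambda$, $k$, $\delta_Y$ and $D$.
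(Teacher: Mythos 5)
Your argument is correct and is precisely the standard proof of this quasi-isometry invariance result: push a geodesic triangle forward to a quasi-geodesic triangle in $Y$, apply the stability of quasi-geodesics (Morse lemma) together with thinness of geodesic triangles in $Y$, and pull the thinness constant back through the lower quasi-isometry bound. The paper gives no proof of its own but cites Bridson--Haefliger, whose proof is exactly this argument, so there is nothing to add.
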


\begin{defn}
Suppose $f:X\rightarrow Y$ is a bijection and $\lambda\geq1,\alpha>0$ are constants. If for distinct $x,y,z\in X$
\[
\frac{d_Y(f(x),f(y))}{d_Y(f(x),f(z))}\leq\eta_{\alpha,\lambda}\left(\frac{d_X(x,y)}{d_X(x,z)}\right),
\]
we say $f$ is an $(\alpha,\lambda)$-power quasisymmetry. Here
\begin{align*}
\eta_{\alpha,\lambda}(w)=
\begin{cases}
\lambda w^{1/\alpha}\qquad &0<w<1\\
\lambda w^{\alpha}\qquad &w\geq1.
\end{cases}
\end{align*}
\end{defn}

\begin{propositionNoParens}[{\cite[Theorem 6.5]{Bonk2000}}]\label{ext}
Suppose $f:X\rightarrow Y$ is a quasi-isometry between Gromov hyperbolic spaces. Then $f$ induces a power quasisymmetry $\bar{f}:\partial_GX\rightarrow\partial_GY$ with respect to the visual metrics. Moreover, $\bar{f}$ is a bi-H$\ddot{o}$lder map with respect to the visual metrics.
\end{propositionNoParens}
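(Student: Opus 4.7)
The plan is to show that the boundary extension $\bar f$ arises automatically from an approximate invariance of the Gromov product under $f$, then upgrade this invariance to a bi-H\"older estimate via the visual-metric formula \eqref{visual}, and finally pass from bi-H\"older to power quasisymmetry. The crux is the first of these three steps.

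For the first step, I would prove that for a $(\lambda,k)$-quasi-isometry $f:X\to Y$ between Gromov hyperbolic spaces there exists $C_0=C_0(\lambda,k,\delta)$ such that
$$
\tfrac{1}{\lambda}(x|y)_w-C_0\;\leq\;\bigl(f(x)\,\big|\,f(y)\bigr)_{f(w)}\;\leq\;\lambda\,(x|y)_w+C_0
$$
for all $x,y,w\in X$. The upper bound follows directly from substituting the two quasi-isometry inequalities into the definition of the Gromov product. The lower bound is the serious part: one uses the Morse stability lemma, since the image $f\circ\gamma$ of a geodesic $\gamma$ from $x$ to $y$ is a quasi-geodesic that stays within bounded Hausdorff distance of a genuine geodesic in $Y$, and in a hyperbolic space the Gromov product is, up to a $\delta$-error, the distance from the base point to such a geodesic.

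Next, Step 1 shows that $f$ sends Gromov sequences to Gromov sequences and preserves their equivalence, so the boundary extension $\bar f:\partial_GX\to\partial_GY$ is well defined. Taking $\liminf$ in the two-sided inequality (with the standard additive slack when extending the Gromov product to the boundary) yields the same estimate for boundary points. Substituting this into the visual-metric identity \eqref{visual} for both $X$ and $Y$, with parameters $\epsilon_X,\epsilon_Y$ and base points $w$ and $f(w)$, produces constants $c\geq 1$ and $\alpha\geq 1$ (depending on $\lambda$, $\epsilon_X$, $\epsilon_Y$) with
$$
c^{-1}\rho_{G,X}(a,b)^{\alpha}\;\leq\;\rho_{G,Y}\bigl(\bar f(a),\bar f(b)\bigr)\;\leq\;c\,\rho_{G,X}(a,b)^{1/\alpha}
$$
for $a,b\in\partial_GX$, which is the bi-H\"older conclusion.

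Finally, for power quasisymmetry I would take distinct $a,b,z\in\partial_GX$, apply the upper bi-H\"older bound to $\rho_{G,Y}(\bar f(a),\bar f(b))$ and the lower bound to $\rho_{G,Y}(\bar f(a),\bar f(z))$, and check that the resulting ratio is dominated by $c^2\bigl(\rho_{G,X}(a,b)/\rho_{G,X}(a,z)\bigr)^{1/\alpha}$ when $\rho_{G,X}(a,b)\leq\rho_{G,X}(a,z)$ and by $c^2\bigl(\rho_{G,X}(a,b)/\rho_{G,X}(a,z)\bigr)^{\alpha}$ otherwise, matching the profile $\eta_{\alpha,c^2}$. The main obstacle throughout is Step 1: the lower bound on $(f(x)|f(y))_{f(w)}$ cannot be proved from the quasi-isometry inequalities alone and genuinely requires hyperbolicity via the Morse stability lemma; everything else is algebraic bookkeeping with this estimate, the visual-metric formula \eqref{visual}, and the definition of $\eta_{\alpha,\lambda}$.
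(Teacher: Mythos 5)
This proposition is quoted in the paper without proof (it is exactly \cite[Theorem 6.5]{Bonk2000}), so the only question is whether your sketch would actually prove it. Your general strategy --- control Gromov products under $f$, then translate through the visual metrics \eqref{visual} --- is the right one and is the one in the cited reference, but two steps do not work as written. First, a small point: the upper bound $(f(x)|f(y))_{f(w)}\leq\lambda(x|y)_w+C_0$ does \emph{not} follow ``directly from substituting the two quasi-isometry inequalities''. The term $-d(f(x),f(y))$ can only be bounded above by $-\lambda^{-1}d(x,y)+k$, which leaves an uncancelled error $\tfrac12(\lambda-\lambda^{-1})d(x,y)$ that is unbounded when $\lambda>1$. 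Both directions of the two-sided estimate require the geometric characterization $(x|y)_w\doteq d(w,[x,y])$ (valid up to an additive $\delta$-error) together with the Morse lemma; this is easy to repair since you invoke exactly these tools for the lower bound.

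The serious gap is Step 3. With bi-H\"older exponent $\alpha\geq1$, applying the upper bound to the numerator and the lower bound to the denominator gives
\[
\frac{\rho_{G,Y}(\bar f(a),\bar f(b))}{\rho_{G,Y}(\bar f(a),\bar f(z))}\;\leq\;c^2\,\frac{\rho_{G,X}(a,b)^{1/\alpha}}{\rho_{G,X}(a,z)^{\alpha}}\;=\;c^2\left(\frac{\rho_{G,X}(a,b)}{\rho_{G,X}(a,z)}\right)^{1/\alpha}\rho_{G,X}(a,z)^{\frac{1}{\alpha}-\alpha},
\]
and the factor $\rho_{G,X}(a,z)^{1/\alpha-\alpha}$ blows up as $\rho_{G,X}(a,z)\to0$ because the exponent is negative. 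So the claimed domination by $\eta_{\alpha,c^2}$ fails; indeed bi-H\"older maps need not be quasisymmetric, so no purely ``algebraic bookkeeping'' from the two separate bi-H\"older inequalities can yield power quasisymmetry. What is actually needed is a quasi-preservation estimate for the \emph{difference} $(a|b)_w-(a|z)_w$ (equivalently, for the cross-difference of the quadruple $a,b,z,w$): up to a bounded additive error this difference is a signed distance between two geodesics determined by the quadruple, hence is multiplied by a factor in $[\lambda^{-1},\lambda]$ and shifted by a bounded amount under $f$ --- again by the Morse lemma. Exponentiating \emph{that} estimate, with $f(w)$ as the basepoint in $Y$, is what produces the profile $\eta_{\alpha,\lambda'}$. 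This three-point (or four-point) estimate is the genuine content of Bonk--Schramm's theorem and is missing from your outline.
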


\section{Pseudoconvex domain of finite type in $\mathbb{C}^2$}\label{sec3}
In this section, we recall some properties of smoothly bounded pseudoconvex domain of finite type in $\mathbb{C}^2$ and it refers to \cite{1989Estimates,Herbort2005} for details. We always assume this domain is of finite type of at most $m$.

\subsection{Construction of polydisks.}\label{sec3.1}

We introduce a new coordinate near boundary points and construct the polydisks firstly.

\begin{lemmaNoParens}[{\cite[Lemma 2.1]{Herbort2005}}]\label{lm3.1}
 For any point $\zeta\in{U}$, where $U$ is a neighborhood of $\partial\Omega$, there is a radius $R>0$\ (independent on $\zeta$) and an injective holomorphic mapping $\Phi_{\zeta}:B(\zeta,R) \rightarrow \mathbb{C}^2$ such that \\
(1) $\Phi_{\zeta}$ is of form
\begin{align}\label{3.1}
\Phi_{\zeta}(z)=&(a_1(\zeta)(z_1-\zeta_1)+a_2(\zeta)(z_2-\zeta_2)\\
&+f_{m}(\zeta;\langle z-\zeta,L(\zeta)\rangle ), \langle z-\zeta,L(\zeta)\rangle )\nonumber
\end{align}
where $a_1(\zeta),a_2(\zeta)$ depend smoothly on $\zeta$ and $f_{m}$ is a holomorphic polynomial of degree $m$ with coefficients depend smoothly on $\zeta$ and $L(z)=(-\frac{\partial r}{\partial z_2}(z),\frac{\partial r}{\partial z_1}(z))$. Here $r$ is the defining function of $\Omega$.\\
(2) The defining function of $\Omega$ is normalized as $r=\rho_{\zeta}\circ\Phi_{\zeta}$ with
\[
\rho_{\zeta}(w)=r(\zeta)+\operatorname{Re}w_1+\sum_{k=2}^{m}P_{k}(\zeta;w_2)+O(|w_2|^{m+1}+|w||w_1|),
\]
where each $P_k(\zeta,\cdot)$ is a real-valued homogeneous polynomial that has no pure term.

\end{lemmaNoParens}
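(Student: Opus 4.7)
The plan is to construct $\Phi_\zeta$ in two stages: first a $\zeta$-dependent affine change of coordinates that aligns $w_1$ with the complex normal and $w_2$ with the complex tangent of $\partial\Omega$ at $\zeta$, and then a holomorphic correction in the $w_1$ variable that absorbs the pure harmonic part in $w_2$ of the Taylor expansion of $r$.

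For the linear stage, since $L(\zeta)$ is Hermitian-orthogonal to the complex normal direction $(\partial r/\partial\bar z_1(\zeta),\partial r/\partial\bar z_2(\zeta))$, setting the second component of $\Phi_\zeta$ to be $\tilde w_2=\langle z-\zeta,L(\zeta)\rangle$ places the complex tangent as the second coordinate. For the first component I would choose $(a_1(\zeta),a_2(\zeta))$ as a smooth scalar multiple of $(\partial r/\partial z_1(\zeta),\partial r/\partial z_2(\zeta))$, normalized so that $\operatorname{Re}\tilde w_1$ appears with coefficient $1$ in the Taylor expansion of $r$ at $\zeta$; the smoothness of the $a_i(\zeta)$ is immediate since $|\nabla r|$ is bounded away from $0$ on a neighborhood of $\partial\Omega$.

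For the normalization stage, Taylor expand $r$ at $\zeta$ through order $m$ in the intermediate coordinates $(\tilde w_1,\tilde w_2)$ and isolate the monomials depending only on $\tilde w_2$. Reality of $r$ forces these to split as $2\operatorname{Re}\,h(\zeta;\tilde w_2)+Q(\zeta;\tilde w_2)$, where $h$ is a holomorphic polynomial in $\tilde w_2$ of degree between $2$ and $m$ (its degree-$0$ term is absorbed into $r(\zeta)$, and its degree-$1$ term vanishes because $\tilde w_2$ is tangential, so the linear part of $r$ is already $\operatorname{Re}\tilde w_1$), and $Q$ is a real polynomial built from monomials $\tilde w_2^j\bar{\tilde w}_2^k$ with $j,k\ge 1$. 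Choosing $f_m(\zeta;w_2)$ to be an appropriate multiple of $h$, the substitution $w_1=\tilde w_1+f_m(\zeta;\tilde w_2)$ cancels the harmonic tangential part and yields
\[
\rho_\zeta(w)=r(\zeta)+\operatorname{Re} w_1+Q(\zeta;w_2)+O(|w_2|^{m+1}+|w||w_1|),
\]
with $Q=\sum_{k=2}^{m}P_k$ in homogeneous components, giving the stated form. The remainder absorbs both the Taylor tail beyond order $m$ in $\tilde w_2$ and every monomial containing a factor of $\tilde w_1$ or $\bar{\tilde w}_1$, since after substitution each such factor becomes $w_1-f_m(\zeta;w_2)=O(|w_1|+|w_2|^2)$.

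The main obstacle I expect is ensuring that the injectivity radius $R$ can be chosen independent of $\zeta$. Each $\Phi_\zeta$ is a polynomial map whose linear part is invertible (since $L(\zeta)$ and the complex normal span $\mathbb{C}^2$) and whose nonlinear contribution comes only from $f_m$, whose coefficients depend smoothly on $\zeta$. Restricting $\zeta$ to the closure of a slightly smaller neighborhood of $\partial\Omega$ gives uniform control on the inverse of the linear part and on the coefficients of $f_m$, whereupon a quantitative inverse function theorem argument produces a common $R>0$ on which every $\Phi_\zeta$ is injective. This uniformity is exactly what matters for the subsequent polydisk construction, but its verification is essentially compactness-plus-continuity rather than a new geometric input.
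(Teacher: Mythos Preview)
The paper does not give its own proof of this lemma: it is quoted verbatim as \cite[Lemma 2.1]{Herbort2005} and used as a black box, so there is nothing in the paper to compare your argument against line by line.

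That said, your two-stage construction (affine alignment of $w_2$ with $L(\zeta)$ and $w_1$ with the complex normal, followed by a polynomial shift $w_1=\tilde w_1+f_m(\zeta;\tilde w_2)$ to kill the pure holomorphic part in $\tilde w_2$) is exactly Catlin's original normalization, which is what Herbort records in the cited lemma. Your handling of the uniform radius $R$ via compactness and smooth dependence of the coefficients is also the standard route. One small point to tighten: when you substitute $\tilde w_1=w_1-f_m(\zeta;w_2)$ into the cross terms $O(|\tilde w||\tilde w_1|)$, you generate contributions of the form $|w|\cdot|w_2|^2$ that are neither $O(|w||w_1|)$ nor $O(|w_2|^{m+1})$ on their own; these are non-harmonic in $w_2$ and must be absorbed back into the $P_k$'s (which is why the $P_k(\zeta;\cdot)$ in the final normal form are not simply the mixed terms of the original Taylor expansion). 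Once you note that, the remainder matches the stated $O(|w_2|^{m+1}+|w||w_1|)$ and your sketch is complete.
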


Now we define the polydisks. For a real-valued $k$-homogeneous polynomial
\[
P(z)=\sum_{j,l\geq{1},j+l=k}a_{j,l}z^j\bar{z}^l,
\]
we denote
\[
\parallel{p}\parallel:=\max\{|p(e^{i\theta})|:\theta\in\mathbb{R}\}.
\]
For any $\zeta\in{U}$ and a positive number $\delta>0$, we define
\[
\tau(\zeta,\delta):=\min\left\{\left(\frac{\delta}{\parallel{P_{l}(\zeta;\cdot)}\parallel}\right)^{1/l}:l=2,\cdots,m\right\}
\]
and the polydisk
\[
R_{\delta}(\zeta):=\{(z_1,z_2)\in\mathbb{C}^2:|z_1|<\delta,|z_2|<\tau(\zeta,\delta)\}.
\]
Denoting
\[
Q_{\delta}(\zeta):=\{z\in{}B(\zeta,R):\Phi_{\zeta}(z)\in{}R_{\delta}(\zeta)\},
\]
we have the following relations.

\begin{lemmaNoParens}[{\cite[Lemma 2.2]{Herbort2005}}]\label{lem3.2}
There exists $\widehat{C}>0,\hat{\delta}>0$ such that for all $0<\delta\leq\hat{\delta}$ and $\zeta_1,\zeta_2\in{U}$, if $\zeta_2\in{}Q_{\delta}(\zeta_1)$ we have \\
(1) $\zeta_1\in{}Q_{\widehat{C}\delta}(\zeta_2)$\\
(2) $Q_{\delta}(\zeta_1)\subset{Q_{\widehat{C}\delta}(\zeta_2)}$\\
(3) $\tau(\zeta_1,\delta)\leq{}\widehat{C}\tau(\zeta_2,\delta)\leq{}\widehat{C}^2\tau(\zeta_1,\delta)$\\
(4) $\tau(\zeta_1,c\delta)\leq c^{1/2}\tau(\zeta_1,\delta)$ for $c>1$, $\tau(\zeta_1,c\delta)\leq c^{1/m}\tau(\zeta_1,\delta)$ for $0<c\leq1$.

\end{lemmaNoParens}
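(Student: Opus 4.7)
I would tackle the four items in the order (4), (3), (1), (2), with (3) carrying most of the work. Part (4) is a direct consequence of the defining formula:
\[
\left(\frac{c\delta}{\|P_l(\zeta_1;\cdot)\|}\right)^{1/l} = c^{1/l}\left(\frac{\delta}{\|P_l(\zeta_1;\cdot)\|}\right)^{1/l},
\]
so for $c > 1$ we use $c^{1/l} \leq c^{1/2}$ (since $l \geq 2$) and for $0 < c \leq 1$ we use $c^{1/l} \leq c^{1/m}$ (since $l \leq m$); taking the minimum over $l$ yields both inequalities.

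For (3), my plan is to exploit the smooth dependence of the coefficients of each $P_l(\zeta;\cdot)$ on $\zeta$ guaranteed by Lemma \ref{lm3.1}, which gives a uniform Lipschitz bound $\bigl|\,\|P_l(\zeta_1;\cdot)\| - \|P_l(\zeta_2;\cdot)\|\,\bigr| \lesssim |\zeta_1-\zeta_2|$. When $\zeta_2 \in Q_\delta(\zeta_1)$, the explicit form \eqref{3.1} shows $\Phi_{\zeta_1}$ is bi-Lipschitz with constants independent of $\zeta_1$, so $|\zeta_1-\zeta_2| \lesssim \tau(\zeta_1,\delta)$. Combined with $\|P_l(\zeta_1;\cdot)\|\tau(\zeta_1,\delta)^l \leq \delta$, this gives
\[
\|P_l(\zeta_2;\cdot)\|\tau(\zeta_1,\delta)^l \leq \delta + C\tau(\zeta_1,\delta)^{l+1} \leq C_0\delta
\]
after shrinking $\hat\delta$ so that $\tau(\zeta_1,\delta) \ll 1$ absorbs the error. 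Taking the minimum over $l$ yields $\tau(\zeta_1,\delta) \leq C_0^{1/m}\tau(\zeta_2, C_0\delta)$, and part (4) absorbs $C_0$ to produce $\tau(\zeta_1,\delta) \leq \widehat{C}\tau(\zeta_2,\delta)$; the reverse inequality follows symmetrically once (1) has been proved.

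Parts (1) and (2) then reduce to direct computations with \eqref{3.1}. For (1), expanding $\Phi_{\zeta_2}(\zeta_1)$ in terms of $a_1(\zeta_2), a_2(\zeta_2), L(\zeta_2), f_m(\zeta_2;\cdot)$ and using $L(\zeta_2) = L(\zeta_1) + O(|\zeta_1-\zeta_2|)$ together with the analogous bounds for the other data shows the components differ from $-\Phi_{\zeta_1}(\zeta_2)$ by terms of order $|\zeta_1-\zeta_2|^2 \lesssim \tau(\zeta_1,\delta)^2$, which are absorbed by enlarging $\delta$ to $\widehat{C}\delta$ and invoking the one-sided half of (3). For (2), given $z \in Q_\delta(\zeta_1)$, I would write $\Phi_{\zeta_2}(z) = (\Phi_{\zeta_2}\circ\Phi_{\zeta_1}^{-1})(\Phi_{\zeta_1}(z))$ and control the composition via the smooth dependence of $\Phi_\zeta$ on $\zeta$; the same engulfing argument places $\Phi_{\zeta_2}(z)$ inside $R_{\widehat{C}\delta}(\zeta_2)$.

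The principal technical obstacle lies in (3): because $\tau$ is a minimum over $l$, the error $C\tau(\zeta_1,\delta)^{l+1}$ must be dominated by $\delta$ uniformly in $l$, and the Lipschitz estimate on $\|P_l\|$ has to be sharp enough that the perturbation does not amplify under the anisotropic scaling $\tau(\zeta_1,\delta)^l$. The assumption $\tau(\zeta_1,\delta) \ll 1$, enforced by shrinking $\hat\delta$, is what supplies the slack needed to close the estimate for every $l$ simultaneously.
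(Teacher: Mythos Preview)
The paper does not give a proof of this lemma: it is quoted verbatim from \cite[Lemma~2.2]{Herbort2005} (the result goes back to Catlin \cite{1989Estimates}), so there is nothing in the paper to compare your argument against. Your proof of (4) is correct, and your outline for (1) and (2) is the standard one once (3) is in hand.

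There is, however, a real gap in your treatment of (3). The crude Lipschitz bound $\bigl|\,\|P_l(\zeta_1;\cdot)\|-\|P_l(\zeta_2;\cdot)\|\,\bigr|\lesssim|\zeta_1-\zeta_2|\lesssim\tau(\zeta_1,\delta)$ produces the error term $C\tau(\zeta_1,\delta)^{l+1}$, which you claim is $\leq C_0\delta$ after shrinking $\hat\delta$. But the finite type hypothesis only gives $\tau(\zeta_1,\delta)\lesssim\delta^{1/m}$, and at a point of type exactly $m\geq 4$ one has $\tau(\zeta_1,\delta)\asymp\delta^{1/m}$; for $l=2$ the error is then $\asymp\delta^{3/m}\gg\delta$ as $\delta\to 0$, so no choice of $\hat\delta$ absorbs it. Making $\tau$ small is not the same as making $\tau^{l+1}/\delta$ small.

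What actually drives Catlin's proof is that the $\zeta$-derivatives of the coefficients of $P_l$ are themselves controlled by the coefficients of the higher $P_{l+1},\ldots,P_m$ (up to a remainder of order $m+1-l$), because all of these quantities arise from successive tangential derivatives of the defining function. A Taylor expansion in $\zeta$ then yields
\[
\|P_l(\zeta_2;\cdot)\|\;\lesssim\;\sum_{k=l}^{m}\|P_k(\zeta_1;\cdot)\|\,\tau(\zeta_1,\delta)^{\,k-l}\;+\;\tau(\zeta_1,\delta)^{\,m+1-l},
\]
and after multiplying by $\tau(\zeta_1,\delta)^{l}$ every term on the right is $\lesssim\delta$ directly from the definition of $\tau$ (the last one because $\tau^{m}\lesssim\delta$). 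This structured estimate, not a generic Lipschitz bound, is the missing ingredient; with it your plan for (1)--(2) goes through.
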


\subsection{The pseudodistance.}\label{sec3.2}
Suppose $x,y\in{U}$, we define that
\[
M(x,y):=\{\delta:y\in{Q_{\delta}(x)}\},
\]
where $Q_{\delta}(x)$ is defined in Section \ref{sec3.1}. Set
\[
d'(x,y)=\inf M(x,y)
\]
if $M(x,y)$ is not empty (i.e., $|x-y|\leq R$) and $d'(x,y)=+\infty$ otherwise. For $x,y\in U$, denote
\begin{align*}
d(x,y):=\min\{d'(x,y),|x-y|\}.
\end{align*}
To extend it to $\Omega$, let
\begin{align*}
D(x,y):=
\begin{cases}
d(x,y)\qquad &x,y\in U\\
d_1(x,y)\qquad &otherwise
\end{cases}
\end{align*}
where
\begin{align*}
d_1(x,y)=
\begin{cases}
0,\qquad x=y\\
1,\qquad x\neq y.
\end{cases}
\end{align*}
Then the following lemma implies that $D(x,y)$ is a pseudodistance on $\Omega$.

\begin{lemmaNoParens}[{\cite[Lemma 3.1]{Herbort2005}}]\label{pseudo}
There is $C>0$ such that for any $x,y,z\in{\Omega}$, we have\\
(1) $D(x,y)=0$ if and only if $x=y$\\
(2) $D(y,x)\leq{}CD(x,y)$\\
(3) $D(x,y)\leq{}C(D(x,z)+D(z,y))$\\
(4) If $x\in U$, $D(x,\pi(x))\leq{}C\delta(x).$\\
(5) For $x,y\in U$ with $|x-y|<R$, then $d(x,y)\geq C^{-1}d'(x,y)$.
\end{lemmaNoParens}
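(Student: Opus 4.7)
The plan is to verify the five properties by reducing each to combinatorial facts about polydisks from Lemma \ref{lem3.2} together with the bounded geometry of $\Phi_\zeta$ from Lemma \ref{lm3.1}. First I would dispose of the degenerate cases where at least one of $x,y,z$ lies outside $U$: in such a case the corresponding $D$-values are instances of $d_1\in\{0,1\}$, while every $D$-value is bounded above by the Euclidean diameter of $\Omega$, so every property reduces to a comparison of universal constants. Thus I may assume $x,y,z\in U$ throughout.

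The routine items come out quickly. For (1), if $x\neq y$ and $d'(x,y)=0$ then $y\in Q_{\delta_n}(x)$ with $\delta_n\searrow 0$; since $\tau(x,\delta_n)\to 0$ by Lemma \ref{lem3.2}(4), both coordinates of $\Phi_x(y)$ vanish, and injectivity of $\Phi_x$ forces $y=x$, a contradiction. For (2), any $\delta>d'(x,y)$ satisfies $y\in Q_\delta(x)$, and Lemma \ref{lem3.2}(1) gives $x\in Q_{\widehat{C}\delta}(y)$; taking infima yields $d'(y,x)\le\widehat{C}d'(x,y)$, and since $|x-y|=|y-x|$ and $\widehat{C}\ge 1$ this propagates to $d(y,x)\le\widehat{C}d(x,y)$. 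For (4), a direct calculation in the Herbort coordinate shows $\Phi_x(\pi(x))$ lies essentially along the $w_1$-axis with $|w_1|\sim\delta(x)$, so $\pi(x)\in Q_{C\delta(x)}(x)$. For (5), bounded Jacobian of $\Phi_x$ on $B(x,R)$ gives $|\Phi_x(y)|\lesssim|x-y|$; choosing $\delta=C|x-y|$ makes $|\Phi_x(y)_1|<\delta$, and since $\tau(x,\delta)\gtrsim\delta^{1/m}\ge\delta$ for small $\delta$ one also has $|\Phi_x(y)_2|<\tau(x,\delta)$, whence $y\in Q_\delta(x)$ and $d'(x,y)\lesssim|x-y|$.

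The main obstacle is (3), and I would attack it by cases on whether each of $d(x,z),d(z,y)$ realizes its minimum as $d'$ or as the Euclidean distance. In the pure $d'$-case, pick $\delta_1>d'(x,z)$ and $\delta_2>d'(z,y)$: Lemma \ref{lem3.2}(1) gives $x\in Q_{\widehat{C}\delta_1}(z)$, and with $\delta'=\max(\widehat{C}\delta_1,\delta_2)$ both $x,y\in Q_{\delta'}(z)$, so Lemma \ref{lem3.2}(2) applied with $\zeta_1=z,\zeta_2=x$ yields $Q_{\delta'}(z)\subset Q_{\widehat{C}\delta'}(x)$, whence $y\in Q_{\widehat{C}\delta'}(x)$ and, sending $\delta_1,\delta_2$ to the infima, $d'(x,y)\le\widehat{C}^2(d'(x,z)+d'(z,y))$. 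The pure Euclidean case follows from $|x-y|\le|x-z|+|z-y|$. For the mixed cases I would invoke property (5) --- which is independent of (3) --- to replace any dominant Euclidean summand by a comparable $d'$-summand via $d'(\cdot,\cdot)\le C|\cdot-\cdot|$, reducing every mixed configuration to the pure $d'$-case at the cost of a uniform constant. The extreme case when some pairwise Euclidean distance exceeds $R$ (making the corresponding $d'$ infinite) is handled by a simple observation: $z\in Q_\delta(x)$ forces $|x-z|\lesssim\delta^{1/m}$, so $d(\cdot,\cdot)$ is bounded below by a positive universal constant whenever the Euclidean distance exceeds $R$, and $D(x,y)\le\operatorname{diam}(\Omega)$ finishes the estimate. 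Merging all subcases produces a single constant $C$ that works throughout.
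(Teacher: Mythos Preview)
The paper does not prove this lemma at all; it is quoted verbatim from Herbort \cite{Herbort2005}, with the authors only supplying the trivial extension from $d$ on $U$ to $D$ on $\Omega$ via $d_1$. So there is no ``paper's own proof'' to compare against, and your task was really to reconstruct Herbort's argument. Your reconstruction is sound: the reduction of (1), (2), (4) to Lemma~\ref{lm3.1}--\ref{lem3.2} is exactly the intended mechanism, the case split in (3) (pure~$d'$, pure Euclidean, mixed via (5), far-apart via the diameter bound) is the standard way to handle a pseudodistance built as a minimum of two quantities, and your observation that (5) is logically prior to (3) and can be fed back into it is correct.

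One small correction in your argument for (5). The claim $\tau(x,\delta)\gtrsim\delta^{1/m}$ is false in general: if the point has type~$2$ (so $\|P_2(x;\cdot)\|$ is bounded below) then $\tau(x,\delta)\asymp\delta^{1/2}\ll\delta^{1/m}$ for $m>2$. What is true, and what you actually need, is the weaker uniform bound $\tau(x,\delta)\ge\delta$ for small $\delta$. This follows because every $\|P_l(x;\cdot)\|$ is bounded above by some $M$ (smooth dependence on $x$, compact $\overline{U}$), so $(\delta/\|P_l\|)^{1/l}\ge(\delta/M)^{1/l}\ge\delta$ once $\delta\le 1/M$ and $l\ge2$. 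With that fix, your choice $\delta=C|x-y|$ does give $|\Phi_x(y)_2|\le A|x-y|=A\delta/C<\delta\le\tau(x,\delta)$ for $C>A$, and (5) follows. Likewise in (4), your claim that $\Phi_x(\pi(x))$ lies ``essentially along the $w_1$-axis'' is justified by noting that $(\Phi_x(\pi(x)))_2=\delta(x)\langle\vec n(\pi(x)),L(x)\rangle$ and $\langle\vec n(\pi(x)),L(\pi(x))\rangle=0$, so the second coordinate is $O(\delta(x)^2)$; this is well inside $\tau(x,C\delta(x))\gtrsim\delta(x)^{1/2}$.
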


Similar to \cite[Lemma 3.4]{wang22}, the inequality ($3$) above also has the following form.
\begin{lemma}\label{wang}
There exists a constant $\epsilon_0>0$ such that for any $\epsilon\leq\epsilon_0$ and $x,x_1,\cdots,x_n,y\in{\Omega}$, we have
\[
D^{\epsilon}(x,y)\leq{}2(D^{\epsilon}(x,x_1)+D^{\epsilon}(x_1,x_2)+\cdots+D^{\epsilon}(x_n,y)).
\]
\end{lemma}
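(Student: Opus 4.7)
My plan is to adapt the standard Frink-type snowflaking argument to the quasi-metric $D$ produced by Herbort's construction, with the key being to identify a three-edge ``triple'' splitting that lets the induction close with the sharp constant~$2$.

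First, I would start from Lemma \ref{pseudo}(3), which reads $D(x,y)\le C(D(x,z)+D(z,y))$, and pass to the ultrametric-type consequence
\[
D(x,y)\le 2C\max\{D(x,z),D(z,y)\}
\]
(using $a+b\le 2\max\{a,b\}$). Iterating this bound once more at an interior edge of a longer chain yields the four-point inequality
\[
D(x,y)\le (2C)^{2}\max\{D(x,x_{k}),\,D(x_{k},x_{k+1}),\,D(x_{k+1},y)\}
\]
for any intermediate index $k$. Setting $\epsilon_{0}:=\tfrac{\log 2}{2\log(2C)}$ so that $(2C)^{2\epsilon_{0}}\le 2$ and raising to the $\epsilon$-th power gives, for every $\epsilon\le\epsilon_{0}$,
\[
D^{\epsilon}(x,y)\le 2\max\{D^{\epsilon}(x,x_{k}),\,D^{\epsilon}(x_{k},x_{k+1}),\,D^{\epsilon}(x_{k+1},y)\}.
\]

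Next, I would prove the main inequality by induction on $n$. The case $n=0$ is immediate. For the inductive step, set $x_{0}=x$, $x_{n+1}=y$, $S=\sum_{i=0}^{n}D^{\epsilon}(x_{i},x_{i+1})$, and choose $k$ to be the largest index with $\sum_{i=0}^{k-1}D^{\epsilon}(x_{i},x_{i+1})\le S/2$; by maximality one automatically has $\sum_{i=k+1}^{n}D^{\epsilon}(x_{i},x_{i+1})\le S/2$ as well. The inductive hypothesis applied to the two subchains $x_{0},\dots,x_{k}$ and $x_{k+1},\dots,x_{n+1}$ then yields $D^{\epsilon}(x,x_{k})\le S$ and $D^{\epsilon}(x_{k+1},y)\le S$, while the middle edge satisfies $D^{\epsilon}(x_{k},x_{k+1})\le S$ trivially. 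Substituting these three bounds into the four-point inequality above produces $D^{\epsilon}(x,y)\le 2S$, closing the induction.

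The main (and only) subtle point is recognising that one must work with this triple ultrametric bound rather than the naive pairwise form: splitting a chain just into two halves with one application of Lemma \ref{pseudo}(3) leaves a prefactor $C^{\epsilon}$ that cannot be absorbed into the constant~$2$ on the right-hand side, and the induction fails to close. With the triple version, both uses of the quasi-triangle inequality are collected into a single prefactor $(2C)^{2\epsilon}$, which is then controlled by the choice of $\epsilon_{0}$.
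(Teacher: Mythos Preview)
Your argument is correct: it is exactly the classical Frink chain-splitting proof, and the only point to note is that the paper does not actually supply its own proof of this lemma but simply refers to \cite[Lemma~3.4]{wang22}, where the same snowflaking argument is carried out. Your choice of $\epsilon_{0}=\tfrac{\log 2}{2\log(2C)}$ and the three-term ultrametric splitting are precisely what make the induction close with constant~$2$, so your write-up matches the intended (referenced) approach.
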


\subsection{Local domains of comparison}
Suppose $\zeta\in{U}$ and
\[
\rho_{\zeta}(w)=r\circ\Phi^{-1}_{\zeta}(w)=r(\zeta)+\operatorname{Re}w_1+\sum_{k=2}^{m}P_{k}(\zeta;w_2)+O(|w_2|^{m+1}+|w||w_1|).
\]
For $t>0$, set
\[
J_{\zeta,t}(w):=\left(t^2+|w_1|^2+\sum_{k=2}^{m}\parallel{P_k(\zeta;\cdot)}\parallel^2|w_2|^{2k}\right)^{1/2}
\]
and
\[
U_{\zeta,t}:=\{w:|w|\leq{}R_0,|\rho_{\zeta}(w)|<sJ_{\zeta,t}(w)\}
\]
for a suitable $R_0<R$ and a small constant $s>0$. Then we have the following lemma.

\begin{lemmaNoParens}[{\cite[Lemma 4.1]{Herbort2005}}]
For sufficiently small $R_0<R$ and $s>0$ there exists a smooth real-valued function $H_{\zeta,t}$ in $U_{\zeta,t}\cup\{\rho_{\zeta}(w)<t\}$ such that\\
(1) For a small constant $\epsilon_0>0$, let $\rho_{\zeta,t}=\rho_{\zeta}+\epsilon_0H_{\zeta,t}$, then the domain
\[
\Omega_t^{\zeta}:=\{w:\rho_{\zeta}(w)<t\}\cup\{w\in{}U_{\zeta,t}:\rho_{\zeta,t}(w)<0\}
\]
is pseudoconvex.
\\
(2) There exist constants $r_0>0,s>0$ such that for any $\zeta\in\partial\Omega,\ t>0$ and $|w|<r_0$ the following holds
\[
P(w,t,s):=\left\{(z_1,z_2):|w_1-z_1|<sJ_{\zeta,t}(w),|w_2-z_2|<s\tau(\zeta,J_{\zeta,t}(w))\right\}\subset\Omega_t^{\zeta}
\]
provided that $\rho_{\zeta}(w)<0.$
\end{lemmaNoParens}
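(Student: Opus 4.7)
Throughout I work in the normalized coordinates of Lemma \ref{lm3.1}, where $\rho_\zeta$ has the explicit model form with exactly the weights $\|P_k(\zeta;\cdot)\|$ that build $J_{\zeta,t}$. The idea is to enlarge the sublevel set $\{\rho_\zeta<t\}$ by a thin anisotropic ``halo'' inside $U_{\zeta,t}$ whose outer edge is cut out by $\rho_{\zeta,t}=\rho_\zeta+\epsilon_0 H_{\zeta,t}$. For the construction of $H_{\zeta,t}$ I would take a smooth negative function on $U_{\zeta,t}\cup\{\rho_\zeta<t\}$ of the form $H_{\zeta,t}=-\chi(\rho_\zeta/sJ_{\zeta,t})\,\psi_{\zeta,t}(w)$, where $\chi$ is a smooth nonnegative cutoff with $\chi\equiv 1$ near $0$ and support in $\{|\rho_\zeta/sJ_{\zeta,t}|<2\}$, and $\psi_{\zeta,t}$ is a strictly plurisubharmonic weight adapted to $J_{\zeta,t}$ (for instance a weighted sum $|w_1|^2 J_{\zeta,t}^{-2}+\sum_k\|P_k(\zeta;\cdot)\|^{2/k}|w_2|^2$, or a suitable $\log J_{\zeta,t}$-type expression). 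The essential analytic point is that the complex Hessian of $\psi_{\zeta,t}$ is uniformly comparable, in the correct anisotropy, to the positivity that $\rho_\zeta$ lacks along its weakly plurisubharmonic directions.

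\textbf{Pseudoconvexity.} For $w_0\in\partial\Omega_t^\zeta$ there are two local types of defining behavior. If a neighborhood of $w_0$ lies in $U_{\zeta,t}$ and outside $\{\rho_\zeta<t\}$, the relevant defining function is $\rho_{\zeta,t}$, and for $\epsilon_0$ chosen small enough the $\epsilon_0 H_{\zeta,t}$ correction makes the Levi form of $\rho_{\zeta,t}$ strictly positive there. If a neighborhood of $w_0$ lies entirely in $\{\rho_\zeta<t\}$, pseudoconvexity is inherited from that of $\Omega$ via the plurisubharmonic exhaustion $-\log(t-\rho_\zeta)$. The hinge points where simultaneously $\rho_\zeta(w_0)=t$ and $\rho_{\zeta,t}(w_0)=0$ need a compatibility check: using the sign of $H_{\zeta,t}$ and the choice of $\epsilon_0$, one verifies that $\{\rho_{\zeta,t}<0\}\cap U_{\zeta,t}$ supplements $\{\rho_\zeta<t\}$ along $\{\rho_\zeta=t\}$ without creating re-entrant angles, so the union is locally pseudoconvex.

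\textbf{Polydisk inclusion.} For $w$ with $\rho_\zeta(w)<0$ and $z\in P(w,t,s)$, expand
\[
\rho_\zeta(z)=\rho_\zeta(w)+\operatorname{Re}(z_1-w_1)+\sum_{k=2}^m\bigl[P_k(\zeta;z_2)-P_k(\zeta;w_2)\bigr]+O(|w||w_1|+|w_2|^{m+1}).
\]
The $z_1$-term is bounded by $sJ_{\zeta,t}(w)$; each $P_k$-difference is estimated using $|z_2-w_2|<s\tau(\zeta,J_{\zeta,t}(w))$ together with the defining identity $\|P_k(\zeta;\cdot)\|\,\tau(\zeta,J_{\zeta,t}(w))^k\le J_{\zeta,t}(w)$ and Lemma \ref{lem3.2}(4), yielding a bound $\lesssim sJ_{\zeta,t}(w)$; the error $O(\cdot)$ is absorbed once $R_0$ is small. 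Hence $\rho_\zeta(z)<CsJ_{\zeta,t}(w)\asymp CsJ_{\zeta,t}(z)$, so after adjusting $s$ one has $z\in U_{\zeta,t}$; the magnitude of $\epsilon_0 H_{\zeta,t}(z)$ furnished by Step~1 then pushes $\rho_{\zeta,t}(z)<0$, placing $z\in\Omega_t^\zeta$ via the second defining piece.

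\textbf{Main obstacle.} The central difficulty is a simultaneous balance in the choice of $H_{\zeta,t}$: the complex Hessian of $\epsilon_0 H_{\zeta,t}$ must dominate the weakly pseudoconvex part of $\rho_\zeta$'s Levi form, pushing $\epsilon_0$ toward a definite minimum, while $\epsilon_0|H_{\zeta,t}|$ on the halo must remain smaller than $sJ_{\zeta,t}$ so that the inclusion of the polydisk step survives, pushing $\epsilon_0$ toward a definite maximum. Engineering a single $H_{\zeta,t}$ whose magnitude and Hessian both scale correctly with $J_{\zeta,t}$, and whose dependence on $\zeta$ and $t$ is smooth and uniform in a neighborhood of $\partial\Omega$, is the main analytic content of the lemma.
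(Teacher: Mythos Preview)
The paper does not give a proof of this lemma: it is simply quoted as \cite[Lemma 4.1]{Herbort2005} and used as a black box in the proof of Lemma~\ref{lower}. There is therefore no in-paper argument to compare your proposal against; any comparison would have to be with Herbort's original proof.

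That said, your sketch has the right architecture (enlarge $\{\rho_\zeta<0\}$ by an anisotropic halo of thickness $\asymp J_{\zeta,t}$, then check the Catlin polydisk fits), but the specific ansatz has a sign problem. You take $H_{\zeta,t}=-\chi\,\psi_{\zeta,t}$ with $\psi_{\zeta,t}$ strictly plurisubharmonic and then assert that ``the $\epsilon_0 H_{\zeta,t}$ correction makes the Levi form of $\rho_{\zeta,t}$ strictly positive.'' But the dominant contribution to $i\partial\bar\partial(\epsilon_0 H_{\zeta,t})$ is $-\epsilon_0\chi\, i\partial\bar\partial\psi_{\zeta,t}\le 0$, so the perturbation \emph{removes} positivity rather than adding it. The phrase ``the positivity that $\rho_\zeta$ lacks'' is also misleading: since $\Omega$ is pseudoconvex, $\rho_\zeta$ already has nonnegative Levi form on its own zero set; the difficulty is that on the \emph{bumped} boundary $\{\rho_{\zeta,t}=0\}$ one is off the original hypersurface and must control the negativity that the bump itself creates. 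In Catlin's and Herbort's constructions the mechanism is that the Levi form of $\rho_\zeta$ in the tangential $w_2$-direction is bounded in absolute value by the same weights $\|P_k(\zeta;\cdot)\|$ that build $J_{\zeta,t}$, so the negative curvature introduced by subtracting $\epsilon_0 J_{\zeta,t}$ can be absorbed for $s,\epsilon_0$ small. Your ``main obstacle'' paragraph is pointed at exactly this balance, but the pseudoconvexity step as written would not close without correcting the role of the sign.
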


\subsection{Catlin-type metric.}\label{sec3.4}
Suppose $\Omega\ =\{z\in\mathbb{C}^2:r(z)<0\}$ is a bounded smooth pseudoconvex domain of finite type, and $\xi\in{\partial\Omega}$ is a point of type $m_{\xi}$. We may assume that $\frac{\partial{r}}{\partial{z_2}}(\xi)\neq 0,$ and define the following vector fields
\[
L_1=\dfrac{\partial}{\partial{z_1}}-(\dfrac{\partial{r}}{\partial{z_2}})^{-1}\dfrac{\partial{r}}{\partial{z_1}}\dfrac{\partial}{\partial{z_2}},\ L_2=\dfrac{\partial}{\partial{z_2}}
\]
in a neighborhood of $\xi$.

Note that $L_1r=0$ and $L_1,L_2$ form a basis of $T^{1,0}_z$ for all $z$ near $\xi$. For any $j,k>0$ set
\[
	\mathcal{L}_{j,k}(z):=\underbrace{L_1\cdots{}L_1}_{j-1}\underbrace{\bar{L}_1\cdots\bar{L}_1}_{k-1}\partial\bar{\partial}r(L_1,\bar{L}_1)(z)
\]
and
\[
C^{\xi}_l(z):=\max\{|\mathcal{L}_{j,k}(z)|:j+k=l\}.
\]
Let $X=b_1L_1+b_2L_2$ be a holomorphic tangent vector at $z$. The $Catlin\ metric$ is defined by
\begin{equation}
M_{\xi}(z,X):=\dfrac{|b_2|}{|r(z)|}+|b_1|\sum_{l=2}^{m_{\xi}}\left(\dfrac{C_l^{\xi}(z)}{|r(z)|}\right)^{\frac{1}{l}},
\end{equation}
which implies that

\begin{equation}\label{kob}
K(z,X)\leq{}C\left(\dfrac{|(\partial{}r(z),X)|}{\delta(z)}+\dfrac{|\langle{}L(z),X\rangle|}{\tau(z,\delta(z))}\right)
\end{equation}
holds for Kobayashi metric near $\xi$ from \cite[Theorem 1]{1989Estimates}.

Moreover, if we set
\begin{equation}
\widetilde{M}_{\xi}(z,X):=\dfrac{|X_N|}{\delta(z)}+|X_H|\sum_{l=2}^{m_{\xi}}\left(\dfrac{C_l^{\xi}(z)}{\delta(z)}\right)^{\frac{1}{l}},
\end{equation}
then $\widetilde{M}_{\xi}(z,X)\asymp{}K(z,X)$ in a neighborhood of $\xi$ by \cite[Lemma 2.2]{liu2023bi}.

By choosing an open neighborhood $U_i$ of $\xi_i$, which form a finite cover of $\partial\Omega$, then there exists a constant $\epsilon>0$ such that
\[
N_{\epsilon}(\partial\Omega)\subset{}\bigcup_{i=1}^sU_i.
\]
For $z\in\Omega\cap{}N_{\epsilon}(\partial\Omega)$, we denote $I_z=\{i:z\in{U_i}\}$ and set
\[
\widetilde{M}(z,X):=\max_{z\in{I_z}}\{\widetilde{M}_{\xi}(z,X)\}.
\]
Since it is upper semi-continuous, we can define a global Finsler metric in $\Omega$ by
\[
\widetilde{K}(z,X):=K(z,X)S(z,X)
\]
with a positive function $S(z,X)\asymp{1}$ and $\widetilde{K}(z,X)=\widetilde{M}(z,X)$ for $z\in{\Omega\cap{N_{\epsilon_0}}(\partial\Omega)}$. And it implies that $\widetilde{K}(z,X)\asymp{}K(z,X)$ in $\Omega$.  We may call it $Catlin$-$type\ metric$ and denote $d_{\widetilde{K}}$ the distance associated to this metric.

Note that the Catlin-type distance statisfies the following estimate.
\begin{lemmaNoParens}[{\cite[Lemma 2.3]{liu2023bi}}]\label{cat}
Let $\Omega\subset\mathbb{C}^2$ be a smoothly bounded pseudoconvex domain of finite type. Then there exists a constant $\epsilon>0$ such that, for any $x,y\in{}N_{\epsilon}(\partial\Omega)$, we have
\begin{equation}
d_{\widetilde{K}}(x,y)\geq\left|\log\left(\dfrac{\delta(y)}{\delta(x)}\right)\right|.
\end{equation}
Moreover, if $\pi(x)=\pi(y)$, there is
\begin{equation}
d_{\widetilde{K}}(x,y)=\left|\log\left(\dfrac{\delta(y)}{\delta(x)}\right)\right|.
\end{equation}
\end{lemmaNoParens}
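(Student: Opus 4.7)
The plan is to exploit the explicit normal-direction lower bound built into the Catlin-type metric and integrate it along arbitrary curves. First I fix $\epsilon>0$ small enough that $N_{\epsilon}(\partial\Omega)\subset N_{\delta_0}(\partial\Omega)\cap N_{\epsilon_0}(\partial\Omega)$; on this tubular neighborhood $\widetilde{K}$ coincides with the local Catlin metric $\widetilde{M}$, and the signed distance $\rho=-\delta$ is $C^2$ with $|\nabla\rho|=1$ by Lemma \ref{lem2.1}. The definition of $\widetilde{M}_{\xi}$ then yields the pointwise inequality
\[
\widetilde{K}(z,X)=\widetilde{M}(z,X)\geq\frac{|X_N|}{\delta(z)}
\]
for every $z\in N_{\epsilon}(\partial\Omega)$ and every $X\in\mathbb{C}^2$, where the decomposition $X=X_N+X_H$ is taken at $\pi(z)$.

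For the lower bound I pick an arbitrary piecewise $C^1$ curve $\gamma:[0,1]\to\Omega$ joining $x$ to $y$. Because $H_{\pi(z)}\partial\Omega\subset T_{\pi(z)}\partial\Omega$, the tangential part satisfies $d\rho(X_H)=0$, and a short computation with the complex-normal frame $\{\vec n,J\vec n\}$ gives $|d\rho(X)|=|d\rho(X_N)|\leq|X_N|$. Combined with the previous display this produces
\[
\widetilde{K}(\gamma(t),\dot\gamma(t))\geq\frac{|d\rho(\dot\gamma(t))|}{\delta(\gamma(t))}=\left|\frac{d}{dt}\log\delta(\gamma(t))\right|
\]
whenever $\gamma(t)\in N_{\epsilon}(\partial\Omega)$. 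If $\gamma$ remains inside $N_{\epsilon}$ throughout, integrating over $[0,1]$ immediately gives $L_{\widetilde{K}}(\gamma)\geq|\log(\delta(y)/\delta(x))|$. Otherwise I let $t_1$ be the first exit time from $N_{\epsilon}$ and $t_2$ the last re-entry time; on $[0,t_1]$ and $[t_2,1]$ the curve travels between $\delta$-level $\delta(x)$ (respectively $\delta(y)$) and level $\epsilon$, contributing at least $\log(\epsilon/\delta(x))+\log(\epsilon/\delta(y))$, which exceeds $|\log(\delta(y)/\delta(x))|$ because $\delta(x),\delta(y)\leq\epsilon$. The intermediate piece contributes nonnegatively, so the estimate survives and passing to the infimum over $\gamma$ gives the first conclusion.

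For the equality statement, if $\pi(x)=\pi(y)=:p$ I take $\gamma$ to be the straight segment along the inward normal through $p$. Its velocity is a real multiple of $\vec n(p)\in N_p\partial\Omega$, so $\dot\gamma_H\equiv 0$ and the tangential sum in $\widetilde{M}$ drops out, leaving $\widetilde{K}(\gamma,\dot\gamma)=|\dot\gamma|/\delta(\gamma)$ identically. Reparameterizing by the distance to the boundary turns the length into $\int ds/s$, which evaluates to exactly $|\log(\delta(y)/\delta(x))|$; together with the already established lower bound this forces equality. The only mildly delicate step in the whole argument is the bookkeeping for curves that escape $N_{\epsilon}$, and the hypothesis $x,y\in N_\epsilon$ is precisely what ensures the exit-and-return contribution dominates the gap in $\log\delta$.
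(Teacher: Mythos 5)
Your argument is correct and complete: the pointwise bound $\widetilde{K}(z,X)\geq |X_N|/\delta(z)\geq |d\rho_z(X)|/\delta(z)$, the integration along arbitrary curves with the exit-time bookkeeping, and the exact computation along the inward normal segment all check out (the exit-and-return case works precisely because $\delta(x),\delta(y)\leq\epsilon$, as you note). The paper itself gives no proof of this lemma --- it is imported verbatim from \cite[Lemma 2.3]{liu2023bi} --- and your proof is the standard Balogh--Bonk normal-projection argument that underlies that reference, so there is nothing substantively different to compare.
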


\section{Estimate of Catlin-type distance}\label{sec4}
In this section, we prove Theorem \ref{guji} and Theorem \ref{thm1.3}. We firstly prove Theorem \ref{guji} and state it again for convenience.
\begin{thm}\label{thm4.1}
Let $\Omega\subset\mathbb{C}^2$ be a smoothly bounded pseudoconvex domain of finite type, then there is a constant $C>0$ such that
\begin{equation}\label{est}
g(x,y)-C\leq{}d_{\widetilde{K}}(x,y)\leq{}g(x,y)+C
\end{equation}
for any $x,y\in\Omega$.
\end{thm}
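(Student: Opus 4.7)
I would prove the two inequalities separately, following the Balogh--Bonk template \cite{2000Gromov} adapted to the finite-type setting via Herbort's pseudodistance $D$, the polydisks $Q_\delta$ of Section \ref{sec3.1}, the normal-ray equality of Lemma \ref{cat}, and (for one route to the lower bound) the comparison domains $\Omega_t^\zeta$ of Section 3.3.

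\textbf{Upper bound.} Set $\alpha = \delta(x)\vee\delta(y)$ and $\delta_0 = D(x,y) + \alpha$, and first assume $x,y \in N_\epsilon(\partial\Omega)$ (the interior case contributes only to the additive constant). Let $x'$ and $y'$ be the points on the inner normals at $\pi(x)$ and $\pi(y)$ respectively, both at depth $\delta_0$, and join $x \to x' \to y' \to y$ piecewise. By the equality case of Lemma \ref{cat}, the two normal legs contribute exactly $\log(\delta_0/\delta(x))$ and $\log(\delta_0/\delta(y))$. For the tangential leg $x' \to y'$, the definition of $D$ together with Lemma \ref{lem3.2}(1)(2) places both endpoints inside $Q_{C\delta_0}(\pi(x))$ for a universal $C$; pulling back by the biholomorphism $\Phi_{\pi(x)}$ of Lemma \ref{lm3.1}, the image sits in the product $R_{C\delta_0}(\pi(x))$, and integrating the upper estimate \eqref{kob} along a straight segment there bounds its Kobayashi length by an absolute constant. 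Summing yields $d_{\widetilde K}(x,y) \leq 2\log\delta_0 - \log\delta(x) - \log\delta(y) + C = g(x,y) + C$.

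\textbf{Lower bound.} The goal is to show that any curve $\gamma$ from $x$ to $y$ has $\widetilde K$-length at least $g(x,y) - C$. My preferred route exploits Lemma \ref{cat} in a midpoint fashion: take a near-geodesic $\gamma$, set $\delta^\star = \max_t \delta(\gamma(t))$, attained at $z_0 = \gamma(t_0)$, and apply Lemma \ref{cat} to the two sub-arcs to obtain
\[
d_{\widetilde K}(x,y) \;\geq\; \log\frac{\delta^\star}{\delta(x)} + \log\frac{\delta^\star}{\delta(y)} \;=\; 2\log\delta^\star - \log\delta(x) - \log\delta(y).
\]
It then suffices to prove the separation inequality $\delta^\star \gtrsim D(x,y) + \alpha$. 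The bound $\delta^\star \geq \alpha$ is trivial; for the $D(x,y)$ part, I would argue by contradiction using the polydisk geometry: a curve confined to $\{\delta < cD(x,y)\}$ can only accumulate horizontal displacement at rate $\tau(\pi(\cdot),\delta(\cdot))$ per unit $\widetilde K$-length (via the $|X_H|$-contribution to $\widetilde M$), which for small $c$ is too slow to reach a point at pseudodistance $D(x,y)$ from $x$. A parallel route closer to Balogh--Bonk is through Carath\'eodory: on the comparison domain $\Omega_{\delta_0}^{\pi(x)}$ one solves an $L^2$ $\bar\partial$-problem with a H\"ormander weight to produce $\psi : \Omega \to \Delta$ with $1 - |\psi(x)|^2 \asymp \delta(x)/\delta_0$, $1 - |\psi(y)|^2 \asymp \delta(y)/\delta_0$, and $|1 - \psi(x)\overline{\psi(y)}| \gtrsim 1$, and then the symmetric form of the Poincar\'e distance gives $d_C(x,y) \geq g(x,y) - C$.

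\textbf{Main obstacle.} The crux is the sharp coefficient $2$ in $g$; this is exactly what separates the present theorem from the weaker estimate in \cite[Theorem 1.12]{liu2023bi}, where a one-sided argument recovers only half of $g$. In the geometric route, the obstacle is the separation lemma $\delta^\star \gtrsim D(x,y)$: this is a precise anisotropic statement about how short a $\widetilde K$-curve staying at small depth can be, and proving it with constants that match on both sides is where the real work lies. In the Carath\'eodory route, the obstacle is building a single peak function $\psi$ that controls both base points simultaneously, which I would attempt by combining Herbort-type peak functions at $\pi(x)$ and at $\pi(y)$ and exploiting $D(x,y) \asymp \delta_0$ to make the two associated polydisks comparable. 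The non-localized regime $|x-y|>R$ and the case where both points lie deep in the interior are routine compactness reductions that only affect the constant $C$.
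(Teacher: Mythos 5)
Your upper bound follows the paper's route exactly (descend both normal rays to a common height comparable to $D(x,y)+\delta(x)\vee\delta(y)$, use the equality case of Lemma \ref{cat} for the vertical legs, and bound the tangential leg by a constant). One caveat: "integrating \eqref{kob} along a straight segment" inside $Q_{C\delta_0}(\pi(x))$ is not immediate, since a Euclidean segment between two points at depth $\delta_0$ need not stay at depth $\gtrsim\delta_0$, so the integrand is not obviously bounded. The paper sidesteps this by quoting Herbort's Main Theorem 2.1, which gives $d_K(x',y')\le C(\rho(x',y')+\rho(y',x'))$ with $\rho(x',y')=\log(1+d'(x',y')/\delta(x')+|\Phi_{x'}(y')_2|/\tau(x',\delta(x')))$; combined with Lemma \ref{lem3.2}(4) this is Lemma \ref{denggao}. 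This is a fixable gap, but you should not present the straight-segment integration as routine.

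The lower bound contains the real problem. Your reduction "it suffices to prove $\delta^\star\gtrsim D(x,y)+\alpha$" is not valid: a lower bound on $d_{\widetilde K}$ must hold for \emph{every} competitor curve, and an arbitrary curve from $x$ to $y$ certainly need not rise to height comparable to $D(x,y)$ -- it can stay low, at the price of being long. Even restricted to near-geodesics, the separation inequality can only be \emph{derived} from the very trade-off estimate that makes it unnecessary, so the reduction is circular. What the paper actually proves is a dichotomy with an optimization. Let $H=\max_t\delta(\gamma(t))$ and $h_0=D(x,y)\wedge\epsilon_0$. If $H\ge h_0$ the vertical estimate of Lemma \ref{cat} already gives $L_{\widetilde K}(\gamma)\ge 2\log(D(x,y)/\sqrt{\delta(x)\delta(y)})-C$. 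If $H<h_0$, one decomposes each half of $\gamma$ at the dyadic levels $\delta=H/2^{k-j}$, and either some dyadic piece satisfies $D(z_{j-1},z_j)>c_1D(x,y)/2^{k-j}$ -- in which case a chain of boundary crossings of the polydisks $\partial Q_{c_1\delta(p_{j-1})}(p_{j-1})$, counted via the quasi-subadditivity of $D^{\epsilon}$ (Lemma \ref{wang}) and the unit lower bound of Lemma \ref{lower}, forces $L_{\widetilde K}\gtrsim(D(x,y)/H)^{\epsilon}$ -- or no piece does, and then $D(x,\gamma(t_1))$ and $D(y,\gamma(t_2))$ are both small, so $D(\gamma(t_1),\gamma(t_2))\gtrsim D(x,y)\ge H$ and the same chain argument applies to the middle piece. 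In all cases $L_{\widetilde K}(\gamma)\ge 2\log(H/\sqrt{\delta(x)\delta(y)})+c(D(x,y)/H)^{\epsilon}-C$, and minimizing $t\mapsto 2\log(t/\sqrt{\delta(x)\delta(y)})+c(D(x,y)/t)^{\epsilon}$ over $t$ (the minimum occurs at $t\asymp D(x,y)$) recovers $g(x,y)-C$. Your "too slow to reach" heuristic is a vague gesture at exactly this chain argument, but without the additive combination of the vertical and horizontal contributions and the final minimization you lose more than a constant (a pure separation-plus-vertical argument leaks at least an iterated logarithm). Your alternative Carath\'eodory route via a two-point peak function on $\Omega_{\delta_0}^{\pi(x)}$ is genuinely different from the paper, which uses Carath\'eodory only for the unit-separation Lemma \ref{lower}; it is plausible in spirit but the required $\bar\partial$ construction with matching constants at both base points is not available off the shelf in the finite-type setting and would itself be a substantial piece of work.
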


Similar to \cite[Lemma 4.4]{wang22}, we will prove the following lemma firstly.
\begin{lemma}\label{lower}
For any $c>0$, there is a constant $c_0>0$ such that for $x,y\in U\cap\Omega$, if $y\in\partial{Q_{c\delta(x)}(x)}$ then we have
\begin{equation}
d_{\widetilde{K}}(x,y)\geq{c_0.}
\end{equation}

\end{lemma}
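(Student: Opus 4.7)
The plan is to case-split on the ratio $\delta(y)/\delta(x)$ and then, in the remaining case, to integrate $\widetilde K$ along an arbitrary curve using Herbort's coordinates from Lemma~\ref{lm3.1}.

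\emph{Unbalanced regime.} If $\delta(y)\le\delta(x)/2$ or $\delta(y)\ge 2\delta(x)$, Lemma~\ref{cat} directly gives $d_{\widetilde K}(x,y)\ge\log 2$, so any $c_0\le\log 2$ works. Assume henceforth $\delta(y)\asymp\delta(x)$, and let $\gamma:[0,1]\to\Omega$ be any piecewise $C^1$-smooth curve with $\gamma(0)=x$, $\gamma(1)=y$. If $\gamma$ leaves the slab $S=\{z:\delta(x)/4\le\delta(z)\le 4\delta(x)\}$ at some first time $t^*$, then Lemma~\ref{cat} applied to $\gamma|_{[0,t^*]}$ already forces $L_{\widetilde K}(\gamma)\ge\log 2$. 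So further assume $\gamma\subset S$.

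\emph{Integration in Herbort's coordinates.} Set $w(t)=\Phi_x(\gamma(t))$, so that at the origin $\partial/\partial w_1$ is complex normal and $\partial/\partial w_2$ complex tangential. From the definition of $\widetilde M$, the comparabilities $\|P_l(z;\cdot)\|\asymp C_l^\xi(z)$, and $1/\tau(z,\delta)\asymp\max_l(\|P_l\|/\delta)^{1/l}$ (a finite sum of positive reals being comparable to its maximum), one obtains the pointwise lower bound
\[
 \widetilde K(z,X)\gtrsim\frac{|X^N|}{\delta(z)}+\frac{|X^H|}{\tau(z,\delta(z))}.
\]
Fix a constant $C_1>c$ large enough that Lemma~\ref{lem3.2}(3)--(4) yield $\tau(\gamma(t),\delta(\gamma(t)))\asymp\tau(x,\delta(x))$ whenever $\gamma(t)\in Q_{C_1\delta(x)}(x)$. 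Assuming first that $\gamma\subset Q_{C_1\delta(x)}(x)$, this combined with $\delta(\gamma)\asymp\delta(x)$ integrates to
\[
 L_{\widetilde K}(\gamma)\gtrsim\frac{1}{\delta(x)}\int_0^1|\dot w_1|\,dt+\frac{1}{\tau(x,\delta(x))}\int_0^1|\dot w_2|\,dt\gtrsim\frac{|w_1(y)|}{\delta(x)}+\frac{|w_2(y)|}{\tau(x,\delta(x))}.
\]
Because $y\in\partial Q_{c\delta(x)}(x)$, either $|w_1(y)|=c\delta(x)$ or $|w_2(y)|=\tau(x,c\delta(x))\asymp c^{1/m}\tau(x,\delta(x))$ by Lemma~\ref{lem3.2}(4); in either case the right-hand side is $\gtrsim\min\{c,c^{1/m}\}$. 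If instead $\gamma$ exits $Q_{C_1\delta(x)}(x)$ at a first time $t^*$, apply the same integration to $\gamma|_{[0,t^*]}$ with $\gamma(t^*)\in\partial Q_{C_1\delta(x)}(x)$ in place of $y$; this produces a positive lower bound depending only on $C_1$. Taking $c_0$ as the minimum of all resulting constants finishes the proof.

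\emph{Main obstacle.} The delicate step is the pointwise lower bound on $\widetilde K$ in the $w$-coordinates, uniformly over $x\in U\cap\Omega$: one must check that throughout $Q_{C_1\delta(x)}(x)$ the complex normal/tangential decomposition at $\gamma(t)$ only perturbs the coordinate axes $\partial/\partial w_j$ by bounded factors, and that the $\tau$-stability of Lemma~\ref{lem3.2}(3) still applies with $\delta$ replaced by $\delta(\gamma(t))$. Both follow from the explicit form of $\Phi_x$, smoothness of the defining function, and the polydisk bounds $|w_1|\le C_1\delta(x)$, $|w_2|\le\tau(x,C_1\delta(x))$, but the uniformity in $x$ requires careful bookkeeping of the smooth dependence of $a_j(\zeta)$, $L(\zeta)$ and $P_k(\zeta;\cdot)$ on the base point.
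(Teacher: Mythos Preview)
Your argument is correct and takes a genuinely different route from the paper's. The paper reduces to the Kobayashi distance and bounds it below by the Carath\'eodory distance: for $c\ge 4\widehat C$ it simply cites Herbort's Lemmas~5.3 and~6.1; for $c<4\widehat C$ it builds a bounded holomorphic function on the local comparison domain $\Omega_t^\zeta$ (essentially $f=(\Phi_x\circ\Phi_{\pi(x)}^{-1})_j$ rescaled by $C\delta(x)$ or $C\tau(x,\delta(x))$), extends it to all of $\Omega$ via Herbort's $H^\infty$ extension Lemma~5.2, and reads off $d_K\ge d_C\ge d_\Delta(0,\tilde f(w_y)/L^*)$. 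Your metric-integration approach avoids both the holomorphic-function machinery and the case split on $c$, working directly with $\widetilde K$; the cost is exactly the coordinate bookkeeping you flag as the main obstacle.

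On that obstacle: the clean way to verify your pointwise bound $\widetilde K(\gamma(t),\dot\gamma)\gtrsim|\dot w_1|/\delta(x)+|\dot w_2|/\tau(x,\delta(x))$ is not to compare frames directly but to use the chain rule $(\partial r(z),X)=(\partial_{w_1}\rho_x)\dot w_1+(\partial_{w_2}\rho_x)\dot w_2$. From the normalised form of $\rho_x$ in Lemma~\ref{lm3.1} one has $|\partial_{w_1}\rho_x-\tfrac12|\lesssim|w|$ and $|\partial_{w_2}\rho_x|\lesssim\sum_k\|P_k\|\,|w_2|^{k-1}+|w_2|^m+|w_1|\lesssim\delta/\tau$ throughout $R_{C_1\delta(x)}$, whence $|\dot w_1|\lesssim|(\partial r(z),X)|+(\delta/\tau)|\dot w_2|$; combined with the straightforward $|\dot w_2|\lesssim|\langle L(z),X\rangle|+\tau|X|$ this yields the bound. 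Your phrasing ``perturbs the coordinate axes by bounded factors'' is slightly misleading here: the $f_m$ term in $\Phi_x$ can tilt $\partial_{w_1}$ substantially away from the normal at $x$, but the normal direction at $\gamma(t)$ tilts with it, and that matching is what the chain-rule identity captures. One minor slip: Lemma~\ref{lem3.2}(4) gives only the upper bound $\tau(x,c\delta)\le c^{1/m}\tau(x,\delta)$ for $c\le 1$; the lower bound you actually need, $\tau(x,c\delta)\ge c^{1/2}\tau(x,\delta)$, follows by applying part~(4) with $1/c>1$ in the reverse direction.
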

\begin{proof}
Since Catlin-type metric is comparable to Kobayashi metric, we just need to prove the lemma for Kobayashi distance. If $c\geq4\widehat{C}$, where $\widehat{C}$ is the constant in Lemma \ref{lem3.2}, then by Lemma 6.1 in \cite{Herbort2005} we have
\[
y\notin Q_{4\widehat{C}\delta(x)}(x)\supset Q_{2\widehat{C}\delta(x)}(\pi(x)).
\]
And Lemma 5.3 in\cite{Herbort2005} shows that if $b\notin Q_{2\delta(a)}(\pi(a))$, then
\[
d_{K}(a,b)\geq c_0.
\]
It deduces that the lemma holds for $c\geq 4\widehat{C}.$

 If $c<4\widehat{C}$, then we have that
\[
\Phi_x(y)\in\{(z_1,z_2):|z_1|<4\widehat{C}\delta(x),|z_2|<\tau(x,4\widehat{C}\delta(x))\}\subset{P(w_x,t,s)}
\]
with a number $s>0$ which does not depend on $x,y$ and $t=4\widehat{C}\delta(x),w_x=\Phi_{\pi(x)}(x)$.\\
\indent We set
\[
f(z_1,z_2)=\dfrac{1}{C\delta(x)}(\Phi_x\circ\Phi^{-1}_{\pi(x)}(z_1,z_2))_1
\]
such that $\parallel{f}\parallel_{\infty}\leq{1}$ with suitable $C,$ which is independent of $x,y.$ To see this we observe that
\[
\Phi_x\circ\Phi^{-1}_{\pi(x)}(z_1,z_2)=(\Phi_x-\Phi_{\pi(x)})\circ\Phi^{-1}_{\pi(x)}(z_1,z_2)+(z_1,z_2),
\]
and hence for some constant $C_1>0$
\[
|f(z_1,z_2)|\leq{\dfrac{|(\Phi_x-\Phi_{\pi(x)})\circ\Phi^{-1}_{\pi(x)}(z_1,z_2)|+|z_1|}{C\delta(x)}}\leq{C_1+\dfrac{|z_1|}{C\delta(x)}}.
\]

Indeed, there exists a constant $C_2>0$ such that
\[
(\Phi_x-\Phi_{\pi(x)})\circ\Phi^{-1}_{\pi(x)}(z_1,z_2)\leq C_2|x-\pi(x)|=C_2\delta(x)
\]
from equality (\ref{3.1}) in Lemma \ref{lm3.1}. And on $P(w_x,t,s)$ the following holds
\[
|z_1|\leq{|z_1-(w_x)_1|+|(w_x)_1|\leq{sJ_{\pi(x),t}(w_x)+t\leq{C_3}\delta(x)}}
\]
with $f(w_x)=0$ and constant $C_3>0$. Moreover, Lemma 5.2 in\cite{Herbort2005} implies that there exists a function $\tilde{f}\in H^{\infty}(\Omega)$ such that $||\tilde{f}||_{\infty}\leq L^*||f||_{\infty}$ and $\tilde{f}(w_x)=f(w_x),\tilde{f}(w_y)=f(w_y)$. It means that
\begin{equation}\label{sj}
d_K(x,y)\geq{d_{C}(x,y)}\geq{d_{\Delta}(0,\dfrac{1}{L^*}\tilde{f}(w_y))}\geq{\dfrac{1}{2}\log\left(1+\dfrac{|\Phi_x(y)_1|^2}{(L^*C\delta(x))^2}\right)}.
\end{equation}
\indent Since $y\in\partial{}Q_{c\delta(x)}(x)$, then $|\Phi_x(y)_1|=c\delta(x)$ or $|\Phi_x(y)_2|=\tau(x,c\delta(x))$ holds. If $|\Phi_x(y)_1|=c\delta(x)$ holds, from (\ref{sj}) we obtain $d_{\widetilde{K}}(x,y)\geq c_0$. If $|\Phi_x(y)_2|=\tau(x,c\delta(x))$ holds, by choosing
\[
f(z_1,z_2)=\dfrac{1}{C\tau(x,\delta(x))}(\Phi_x\circ\Phi^{-1}_{\pi(x)}(z_1,z_2))_2
\]
with suitable $C$ we get that
\begin{equation}
d_K(x,y)\geq{d_{C}(x,y)}\geq{d_{\Delta}(0,\dfrac{1}{L^*}f(w_y))}\geq{\dfrac{1}{2}\log\left(1+\dfrac{|\Phi_x(y)_2|^2}{(L^*C\tau(x,\delta(x)))^2}\right)}.
\end{equation}
It implies $d_{\widetilde{K}}(x,y)\geq{}c_0$ whenever $y\in\partial{}Q_{c\delta(x)}(x)$.
\end{proof}

Next we prove the following result which gives the upper bound $d_{\widetilde{K}}(x,y)$ when $\delta(x)=\delta(y)$ and $D(x,y)\leq{c\delta(x)}$ with a constant $c>0$.
\begin{lemma}\label{upp}\label{denggao}
After shrinking $U$, then for any $c>0$ there exist a constant $C>0$ such that, for any $x,y\in{\Omega\cap{}U}$ with $\delta(x)=\delta(y)$ and $D(x,y)\leq{c\delta(x)}$, the following holds
\begin{equation}
d_{\widetilde{K}}(x,y)\leq{C}.
\end{equation}

\end{lemma}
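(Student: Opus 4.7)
The plan is to construct an analytic polydisk sitting inside $\Omega$ that contains both $x$ and $y$ in its strict interior, then bound the Kobayashi distance between them via a product-disk estimate. The geometric device is an inward normal shift, large enough that the required polydisk fits in $\Omega$.

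First I would establish the key inclusion. Write $c_0 = c_0(m)$ for a large absolute constant to be chosen, and for any $\hat x \in \Omega \cap U$ define
\[
P(\hat x) := \Phi_{\hat x}^{-1}\bigl(\{|w_1|<\delta(\hat x)/c_0,\ |w_2|<\tau(\hat x,\delta(\hat x)/c_0)\}\bigr).
\]
The claim is $P(\hat x) \subset \Omega$. Using the defining property $\|P_k(\hat x;\cdot)\|\tau(\hat x,\delta(\hat x))^k \leq \delta(\hat x)$ of $\tau$ together with Lemma \ref{lem3.2}(4), which yields $\tau(\hat x,\delta(\hat x)/c_0)^k \leq c_0^{-k/m}\tau(\hat x,\delta(\hat x))^k$, the expansion of $\rho_{\hat x}$ in Lemma \ref{lm3.1}(2) becomes
\[
\rho_{\hat x}(w) \leq -\delta(\hat x) + \tfrac{1}{c_0}\delta(\hat x) + (m-1)c_0^{-2/m}\delta(\hat x) + o(\delta(\hat x)) < -\tfrac{1}{2}\delta(\hat x)
\]
throughout the coordinate polydisk, once $c_0$ is large enough.

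Next I would choose $A = A(c)$ with $A \geq 2Cc c_0$ (where $C$ is the constant from Lemma \ref{pseudo}(5)) and set $\hat x = x - A\delta(x)\vec n(\pi(x))$, $\hat y = y - A\delta(y)\vec n(\pi(y))$. The hypothesis $D(x,y) \leq c\delta(x)$ gives $y \in Q_{Cc\delta(x)}(x)$. Transferring to $\Phi_{\hat x}$-coordinates — using that a normal shift nearly preserves the normalized coordinates and that $\tau(\hat x,\cdot)\asymp\tau(x,\cdot)$ by Lemma \ref{lem3.2}(3) — together with $\delta(\hat x) \asymp A\delta(x)$ and the choice of $A$, place $\hat y$ in the half-polydisk of $P(\hat x)$. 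Bounding the Kobayashi distance on $P(\hat x)\subset\Omega$ by the maximum of two Poincar\'e distances then yields $d_{\widetilde K}(\hat x,\hat y) \lesssim \log 3$. To join $x$ to $\hat x$, I would parameterize the normal segment $t \mapsto x - tA\delta(x)\vec n(\pi(x))$; its Catlin-type integrand is $A/(1+tA)$, so $d_{\widetilde K}(x,\hat x) \leq \log(1+A) = O_c(1)$, and similarly for $(y,\hat y)$. The triangle inequality then closes the argument.

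The main difficulty will be the polydisk inclusion $P(\hat x) \subset \Omega$: generically each term $P_k(\hat x;w_2)$ is of the same order as $\delta(\hat x)$ throughout the natural polydisk $R_{\delta(\hat x)}(\hat x)$, so gaining room forces one to shrink the polydisk by a factor $c_0$ and rely on the $c_0^{-k/m}$ decay from Lemma \ref{lem3.2}(4). The shift parameter $A$ must then absorb the mismatch between $Cc\delta(x)$, the scale of $|\Phi_x(y)|$, and $\delta(\hat x)/c_0 \asymp A\delta(x)/c_0$, the admissible scale inside the shrunken polydisk; this forces $A\propto cc_0$ and produces a constant $C=C(c)$ in the conclusion. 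A secondary technical point is justifying the ``nearly preserves'' claim when passing from $\Phi_x$- to $\Phi_{\hat x}$-coordinates, which follows from smoothness of the coefficients $a_j(\zeta)$ and $f_m(\zeta;\cdot)$ in Lemma \ref{lm3.1} combined with Lemma \ref{lem3.2}(1)--(3).
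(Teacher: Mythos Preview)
Your approach is sound but takes a genuinely different route from the paper's. The paper's proof is almost entirely citation-based: it splits into the two cases $D(x,y)=|x-y|$ and $D(x,y)=d'(x,y)$, invokes \cite[Theorem~3.5]{liu2023bi} in the first case, and in the second case applies Herbort's global upper bound \cite[Main Theorem~2.1]{Herbort2005},
\[
d_K(x,y)\leq C_2\bigl(\rho(x,y)+\rho(y,x)\bigr),\qquad \rho(x,y)=\log\Bigl(1+\frac{d'(x,y)}{\delta(x)}+\frac{|\Phi_x(y)_2|}{\tau(x,\delta(x))}\Bigr),
\]
then simply checks that each summand in $\rho$ is bounded by a constant depending only on $c$, using Lemma~\ref{lem3.2}(4). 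Your direct polydisk construction with the inward normal shift is essentially re-deriving, by hand, the special case of Herbort's upper bound that is needed here; this buys self-containment at the cost of the nontrivial bookkeeping you flag. Two points worth sharpening in your write-up: first, the ``nearly preserves normalized coordinates'' step is the real work---you must control both $|\Phi_{\hat x}(\hat y)_1|$ and $|\Phi_{\hat x}(\hat y)_2|$, and the former picks up a contribution from $A\delta(x)\bigl(\vec n(\pi(y))-\vec n(\pi(x))\bigr)$ which is of order $A\delta(x)|x-y|\lesssim A\delta(x)\tau(x,c\delta(x))$ and must be shown to be $O(\delta(\hat x)/c_0)$; second, since $A=A(c)$, you should note explicitly that when $(1+A)\delta(x)\geq\epsilon_0$ both points lie in a fixed compact subset of $\Omega$ and the bound is immediate, so that the shrinkage of $U$ in the statement can indeed be taken independent of $c$. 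For the normal segments you can skip the integral computation and cite Lemma~\ref{cat} directly, since $\pi(x)=\pi(\hat x)$.
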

\begin{proof}
We just need to prove the lemma for Kobayashi distance $d_K$. If $D(x,y)=|x-y|$, then Theorem 3.5 in \cite{liu2023bi} implies that there exists a constant $C_1>0$ such that
\[
d_{\widetilde{K}}(x,y)\leq2\log\frac{|x-y|+\delta(x)\vee\delta(y)}{\sqrt{\delta(x)\delta(y)}}+C_1.
\]
And it implies the lemma.\\
\indent If $D(x,y)=d'(x,y)$. From \cite[Main Theorem 2.1]{Herbort2005}, we have
\[
d_{K}(x,y)\leq C_2(\rho(x,y)+\rho(y,x))
\]
for some constant $C_2>0$ with
\[
\rho(x,y)=\log(1+\frac{d'(x,y)}{\delta(x)}+\frac{|\Phi_x(y)_2|}{\tau(x,\delta(x))}).
\]
Since $d'(x,y)\leq c\delta(x)$ and according to (4) in Lemma \ref{lem3.2}, there exists a constant $C_3>0$ such that $|\Phi_x(y)_2|\leq\tau(x,D(x,y))\leq\tau(x,c\delta(x))\leq C_3\tau(x,\delta(x))$. Consequently, we can conclude that $\rho(x,y)\leq C_4,\rho(y,x)\leq C_4$ with some $C_4>0$. It completes the proof.
\end{proof}

\noindent{}$Proof\ of\ Theorem\ \ref{thm4.1}.$ Now we proceed to prove Theorem \ref{thm4.1}, which is similar to \cite[Theorem 1.1]{2000Gromov}. Let $N:=N_{\epsilon_0}(\partial\Omega)\cap\Omega$ be a neighborhood such that Lemma \ref{lem2.1}, Lemma \ref{lower} and Lemma \ref{denggao} hold. Denote $K$ the closure of $\Omega\setminus{N}$, then $K$ is compact in $\Omega$. In the following dicussion, we will denote by $C$ positive constants only depending on $\epsilon_0$ and the various other constants that are associated with $\Omega$. It is important to note that the specific value of $C$ is not relevant and may change even within the same line.

(1) If $x,y\in K$, then
\[
0\leq{g(x,y)}\leq C,\ 0\leq{}d_{\widetilde{K}}(x,y)\leq C.
\]
Hence inequality (\ref{est}) is true.

(2) If $x\in K,\ y\in N$ or $y\in K,\ x\in N$. We may assume that $x\in K,\ y\in N$. Then
\[
\log{\frac{1}{\delta(y)}}-C\leq g(x,y)\leq\log{\frac{1}{\delta(y)}}+C.
\]

To get the upper bound of $d_{\widetilde{K}}$, we set $y'=\pi(y)-\epsilon_0\vec{n}(\pi(y))$. We know that $y'\in K,\ \pi(y')=\pi(y)$. From Lemma \ref{cat}, we know that
\[
d_{\widetilde{K}}(y,y')=\log{\frac{1}{\delta(y)}}+C.
\]
Since $x,y'\in K$, then $d_{\widetilde{K}}(x,y')\leq C$. It implies that $d_{\widetilde{K}}(x,y)\leq d_{\widetilde{K}}(x,y')+d_{\widetilde{K}}(y',y)\leq\log{\frac{1}{\delta(y)}}+C$. Hence we get the right half of inequality (\ref{est}).

To obtain lower bound for $d_{\widetilde{K}}(x,y)$. Let $\gamma$ be an arbitrary piecewise $C^1$-smooth curve in $\Omega$ joining $y$ and $x$. Then there is a first point $x'$ on the curve with $x'\in K$, which means that $\delta(x')=\epsilon_0$. Suppose $\alpha$ is the subcurve of $\gamma$ with endpoints $y$ and $x'$ and let $L_{\widetilde{K}}(\gamma)$ denote the length of the curve $\gamma$ with respect to the Finlser metric $\widetilde{K}$, then we have by Lemma \ref{cat}
\[
L_{\widetilde{K}}(\gamma)\geq L_{\widetilde{K}}(\alpha)\geq\log\frac{1}{\delta(y)}-C.
\]
Taking the infimum for $\gamma$, we obtain
\[
d_{\widetilde{K}}(x,y)\geq\log\frac{1}{\delta(y)}-C.
\]
It completes the proof.

(3) Now we may assume that $x,y\in N$. First we estimate the upper bound of $d_{\widetilde{K}}.$\\
Case 1. $D(x,y)\leq{\delta(x)\vee{\delta(y)}}$.\\
We may assume that $\delta(y)\geq\delta(x).$ Then
\[
\log\dfrac{\delta(y)}{\delta(x)}\leq g(x,y)\leq\log\dfrac{\delta(y)}{\delta(x)}+C.
\]
Let $x'=\pi(x)-\delta(y)\cdot\vec{n}(\pi(x))$, by Lemma \ref{cat} we get
\[
d_{\widetilde{K}}(x,x')={\log\dfrac{\delta(y)}{\delta(x)}}.
\]
From Lemma \ref{denggao} we obtain $d_{\widetilde{K}}(x',y)\leq{C}$, which means that
\[
d_{\widetilde{K}}(x,y)\leq d_{\widetilde{K}}(x,x')+d_{\widetilde{K}}(x',y)\leq{g(x,y)+C}.
\]

\noindent{}Case 2. $D(x,y)>{\delta(x)\vee{\delta(y)}}.$\\
We have that
\[
2\log\dfrac{D(x,y)}{\sqrt{\delta(x)\delta(y)}}\leq g(x,y)\leq2\log\dfrac{D(x,y)}{\sqrt{\delta(x)\delta(y)}}+C.
\]
Let $x'=\pi(x)-h_0\vec{n}(\pi(x))$ and $y'=\pi(y)-h_0\vec{n}(\pi(y))$ with $h_0=D(x,y)\wedge\epsilon_0$. Note that $\delta(x')=\delta(y')=h_0\leq D(x,y)$. Then the following holds
\[
d_{\widetilde{K}}(x',x)+d_{\widetilde{K}}(y',y)\leq{2\log\dfrac{D(x,y)}{\sqrt{\delta(x)\delta(y)}}}.
\]
Therefore, we only need to estimate the upper bound of $d_{\widetilde{K}}(x',y')$. If $h_0=\epsilon_0$, then both $x'$ and $y'$ lie in $K$, hence $d_{\widetilde{K}}(x',y')\leq C$. We may assume that $h_0=D(x,y)$. From Lemma \ref{pseudo} we get that $D(x',x)\leq{CD(x,y)}$ and $D(y',y)\leq{CD(x,y)}$, which means $D(x',y')\leq CD(x,y)$. By Lemma \ref{denggao}, we have
\[
d_{\widetilde{K}}(x',y')\leq{C}.
\]
It deduces that
 \[
 d_{\widetilde{K}}(x,y)\leq{g(x,y)}+C.
 \]

Now we estimate the lower bound of $d_{\widetilde{K}}$. We may assume that $\delta(y)\geq\delta(x)$.

\noindent{}Case 1. $D(x,y)\leq{\delta(x)\vee{\delta(y)}}$.\\
We have that
\[
\log\dfrac{\delta(y)}{\delta(x)}\leq g(x,y)\leq\log\dfrac{\delta(y)}{\delta(x)}+C
\]
and by Lemma \ref{cat}
\[
d_{\widetilde{K}}(x,y)\geq{\log\dfrac{\delta(y)}{\delta(x)}}-C,
\]
which is the desired inequality.\\
\noindent{}Case 2. $D(x,y)>\delta(x)\vee{\delta(y)}$.\\
We know that
\[
2\log\dfrac{D(x,y)}{\sqrt{\delta(x)\delta(y)}}\leq g(x,y)\leq2\log\dfrac{D(x,y)}{\sqrt{\delta(x)\delta(y)}}+C.
\]

 Let $\gamma:[0,1]\rightarrow\Omega$ be an arbitrary curve joining $x$ and $y$. Define $H=\underset{z\in\gamma}{\max}\ \delta(z)$. There exists a constant $t_0\in[0,1]$ such that $\delta(\gamma(t_0))=H.$ We consider two curves $\gamma_1=\gamma|_{[0,t_0]}$ and $\gamma_2=\gamma|_{[t_0,1]}$. There are two possibilities.\\
 (a) If $H\geq{h_0}$ with $h_0=D(x,y)\wedge\epsilon_0$, then
 \[
 L_{\widetilde{K}}(\gamma_1)\geq{\log\dfrac{h_0}{\delta(x)}},\ L_{\widetilde{K}}(\gamma_2)\geq{\log\dfrac{h_0}{\delta(y)}}.
\]
Thus we get that
 \[
 L_{\widetilde{K}}(\gamma)\geq{\log\dfrac{h_0^2}{\delta(x)\delta(y)}}\geq{2\log\dfrac{D(x,y)}{\sqrt{\delta(x)\delta(y)}}}-C
 \]
with $h_0\geq CD(x,y)$.

\noindent{}(b) If $H<h_0$.\\
\noindent Since $\delta(x)\leq{H},$ there exists a integer $k\in{\mathbb{N}}$ such that
\[
\dfrac{H}{2^k}<\delta(x)\leq{\dfrac{H}{2^{k-1}}}.
\]
We consider the curve $\gamma_1$ and define $0=s_0\leq{s_1<\cdots<s_k\leq{t_0}}$ as follows. Let
\[
s_j=\max\left\{s\in[0,t_0]:\delta(\gamma(s))=\dfrac{H}{2^{k-j}}\right\}
\]
for $j=1,2\cdots,k.$ Setting $z_j=\gamma(s_j)$, we note that
\[
1\leq\dfrac{\delta(z_j)}{\delta(z_{j-1})}\leq2
\]
for $j=1,\cdots,k.$\\
Let $\epsilon=\epsilon_0$ in Lemma \ref{wang} and denote $c_1=\left(\dfrac{1-1/2^{\epsilon}}{16}\right)^{\frac{1}{\epsilon}}$. There are two cases:

$(\romannumeral1)$. There is an index $l\in\{1,\cdots,k\}$ such that
\[
D(z_{l-1},z_l)>c_1\dfrac{D(x,y)}{2^{k-l}}.
\]
Note that for $t\in[s_{l-1},s_l]$ one have
\[
\delta(\gamma(t))\leq{\dfrac{H}{2^{k-l}}}.
\]
Denote $p=z_{l-1}$ and $q=z_l$. Since
\begin{equation}\label{equa}
D(p,q)>c_1\dfrac{D(x,y)}{2^{k-l}}\geq{c_1\delta(p)},
\end{equation}
we have that
 \[
 q\notin{Q_{c_1\delta(p)}(p)}.
 \]
It means $\gamma\cap\partial{Q_{c_1\delta(p)}(p)}\neq\emptyset.$ Hence the following set
\[
S^{p,q}=\{m\in\mathbb{N}:\exists u_1,\cdots,u_m\ \operatorname{so\ that}\ s_{l-1}=u_0<u_1<\cdots<u_m\leq{s_l},\]
\[
\gamma(u_{j})\in\partial{Q_{c_1\delta(p_{j-1})}(p_{j-1})},1\leq{j}\leq{}m\}
\]
is not empty with $p_{j-1}=\gamma(u_{j-1})$.

By Lemma \ref{lower} there exists $c_0>0$ such that
\[
L_{\widetilde{K}}(\gamma|_{[s_{l-1},s_l]})\geq\sum_{j=1}^md_{\widetilde{K}}(p_{j-1},p_j)\geq{c_0m},
\]
we know $S^{p,q}$ is a finite set. Denote $m_0=\max{}S^{p,q}$, then
\begin{align*}
D^{\epsilon}(p,q)&\leq{2(\sum_{j=1}^{m_0}D^{\epsilon}}(p_{j-1},p_j)+D^{\epsilon}(p_{m_0},q))\\
&\leq{2c_1^{\epsilon}(m_0+1)\left(\dfrac{H}{2^{k-l}}\right)^{\epsilon}}.
\end{align*}
Hence one obtain that
\[
m_0\geq{}\dfrac{1}{2}\left(\dfrac{D(x,y)}{H}\right)^{\epsilon}-1,
\]
which means
\[
L_{\widetilde{K}}(\gamma|_{[s_{l-1},s_l]})\geq{\dfrac{c_0}{2}}\left(\dfrac{D(x,y)}{H}\right)^{\epsilon}-c_0.
\]
Let $t_1=s_k\leq{t_0}$, then
\begin{align*}
L_{\widetilde{K}}(\gamma|_{[0,t_1]})&=L_{\widetilde{K}}(\gamma|_{[0,s_{l-1}]})+L_{\widetilde{K}}(\gamma|_{[s_{l-1},s_l]})+L_{\widetilde{K}}(\gamma|_{[s_l,s_k]})\\
&\geq{\log\left(\dfrac{\delta(z_{l-1})}{\delta(z_0)}\right)+\dfrac{c_0}{2}\left(\dfrac{D(x,y)}{H}\right)^{\epsilon}+\log\left(\dfrac{\delta(z_k)}{\delta(z_l)}\right)-C}\\
&\geq{}\log\left(\dfrac{H}{\delta(x)}\right)+\dfrac{c_0}{2}\left(\dfrac{D(x,y)}{H}\right)^{\epsilon}-C.
\end{align*}

($\romannumeral2$). If the following holds
\[
D(z_{j-1},z_j)\leq{c_1\dfrac{D(x,y)}{2^{k-j}}}
\]
for all $j=1,\cdots,k$, then
\begin{align*}
D^{\epsilon}(x,\gamma(t_1))\leq{2\sum_{j=1}^kD^{\epsilon}(z_{j-1},z_j)}\leq\frac{2c_1^{\epsilon}}{1-1/{2^{\epsilon}}}D^{\epsilon}(x,y)\leq{\dfrac{1}{8}D^{\epsilon}(x,y)}.
\end{align*}
Moreover, we also have
\[
L_{\widetilde{K}}(\gamma|_{[0,t_1]})\geq{}\log\dfrac{H}{\delta(x)}.
\]
Thus there are two cases:
\begin{align}
L_{\widetilde{K}}(\gamma|_{[0,t_1]})\geq{}\log\left(\dfrac{H}{\delta(x)}\right)+C\left(\dfrac{D(x,y)}{H}\right)^{\epsilon}-C
\end{align}
or

\begin{align}\label{case1}
L_{\widetilde{K}}(\gamma|_{[0,t_1]})\geq{}\log\dfrac{H}{\delta(x)}\ \operatorname{and}\ D^{\epsilon}(x,\gamma(t_1))\leq\dfrac{1}{8}D^{\epsilon}(x,y).
\end{align}

Applying same consideration to $\gamma_2$ we can find $t_2\in[t_0,1]$ such that one of the following alternatives holds
\begin{align}
L_{\widetilde{K}}(\gamma|_{[t_2,1]})\geq{}\log\left(\dfrac{H}{\delta(y)}\right)+C\left(\dfrac{D(x,y)}{H}\right)^{\epsilon}-C
\end{align}
or
\begin{align}\label{case2}
L_{\widetilde{K}}(\gamma|_{[t_2,1]})\geq{}\log\dfrac{H}{\delta(y)}\ \operatorname{and}\ D^{\epsilon}(y,\gamma(t_2))\leq\dfrac{1}{8}D^{\epsilon}(x,y).
\end{align}

Suppose (\ref{case1}) and (\ref{case2}) hold, we have
\begin{align*}
D^{\epsilon}(\gamma(t_1),\gamma(t_2))&\geq\dfrac{1}{2}D^{\epsilon}(x,y)-D^{\epsilon}(x,\gamma(t_1))-D^{\epsilon}(\gamma(t_2),y)
\\
&\geq{\dfrac{1}{4}D^{\epsilon}(x,y)\geq{\dfrac{1}{4}}H^{\epsilon}}
\end{align*}
which means that
\[
D(\gamma(t_1),\gamma(t_2))\geq\dfrac{D(x,y)}{4^{\frac{1}{\epsilon}}}\geq{\dfrac{H}{4^{\frac{1}{\epsilon}}}}=\dfrac{\delta(t_1)}{4^{\frac{1}{\epsilon}}}.
\]
By using the same discussion for (\ref{equa}) in $(\romannumeral1)$, we deduce that
\[
L_{\widetilde{K}}(\gamma|_{[t_1,t_2]})\geq{C\left(\dfrac{D(x,y)}{H}\right)^{\epsilon}}-C,
\]
which implies that
\begin{align*}
L_{\widetilde{K}}(\gamma)&=L_{\widetilde{K}}(\gamma|_{[0,t_1]})+L_{\widetilde{K}}(\gamma|_{[t_1,t_2]})+L_{\widetilde{K}}(\gamma|_{[t_2,1]})\\
&\geq2\log\dfrac{H}{\sqrt{\delta(x)\delta(y)}}+C\left(\dfrac{D(x,y)}{H}\right)^{\epsilon}-C
\end{align*}

\noindent{}and the last inequality holds for other cases. Set
\[
h(t)=2\log\dfrac{t}{\sqrt{\delta(x)\delta(y)}}+C\left(\dfrac{D(x,y)}{t}\right)^{\epsilon}
\]
we know that $h$ has a minimum if
\[
t=(\frac{C\epsilon}{2})^{1/\epsilon}D(x,y).
\]
And it implies that
\[
L_{\widetilde{K}}(\gamma)\geq{2\log\dfrac{D(x,y)}{\sqrt{\delta(x)\delta(y)}}-C}.
\]
If we take infimum for $\gamma$, then we get that
\[
d_{\widetilde{K}}(x,y)\geq{2\log\dfrac{D(x,y)}{\sqrt{\delta(x)\delta(y)}}}-C
\]
and the proof is completed.$\hfill\qed$

\bigskip
\noindent $Proof\ of\ Theorem\ \ref{thm1.3}.$ The following proof is similar to \cite[Theorem 1.4]{2000Gromov} and we present it here for the sake of completeness.

Suppose $r_{ij}\geq0$ are given numbers such that $r_{ij}\leq Cr_{ji}$ and $r_{ij}\leq  C(r_{ik}+r_{kj})$ for $i,j,k\in\{1,2,3,4\}$ with a constant $C\geq1$. Then we have $r_{12}r_{34}\leq 4 C^4((r_{13}r_{24})\vee(r_{14}r_{23}))$. Indeed, we assume that $r_{13}$ is the smallest of the quantities $r_{ij}$ appearing on the right hand side of this inequality. Then we know that
\[
r_{12}\leq  C(r_{13}+r_{32})\leq  C(C+1)r_{23}\leq 2C^2r_{23}
\]
and $r_{34}\leq  C(r_{31}+r_{14})\leq 2 C^2r_{14}$. It deduces the inequality.

Set $x_i\in\Omega\ (i=1,2,3,4)$ and $r_{ij}=d_{ij}+h_i\vee h_j$ where $d_{ij}=D(x_i,x_j),h_i=\delta(x_i)$, then we have $r_{ij}\leq C_1r_{ji}$ and $r_{ij}\leq C_1(r_{ik}+r_{kj})$ for some constant $C_1\geq1$. It implies that
\[
r_{12}r_{34}\leq 4C_1^2((r_{13}r_{24})\vee(r_{14}r_{23}))
\]
and
\begin{align*}
&(d_{12}+h_1\vee h_2)(d_{34}+h_3\vee h_4)\\
&\leq 4C_1^2(((d_{13}+h_1\vee h_3)(d_{24}+h_2\vee h_4))\vee((d_{14}+h_1\vee h_4)(d_{23}+h_2\vee h_3))).
\end{align*}
From inequality (\ref{est}), we obtain
\[
d_{\widetilde{K}}(x_1,x_2)+d_{\widetilde{K}}(x_3,x_4)\leq (d_{\widetilde{K}}(x_1,x_3)+d_{\widetilde{K}}(x_2,x_4))\vee(d_{\widetilde{K}}(x_1,x_4)+d_{\widetilde{K}}(x_2,x_3))+C',
\]
for some constant $C'>0$ independent of points. By Remark \ref{rmk}, this inequality is equivalent to the Gromov hyperbolicity of the space $(\Omega,d_{\widetilde{K}})$.
$\hfill\qed$

\vspace{0.3cm} \noindent{\bf Acknowledgements}. The authors would like to thank Professor Jinsong Liu for many precious suggestions. We would also like to thank the referee for a careful reading and valuable comments.

\bibliography{reference}
\bibliographystyle{plain}{}
\end{document}